\documentclass[11pt]{article}

\usepackage[margin=1in]{geometry}
\usepackage{amsmath,amssymb,amsthm}
\usepackage{mathtools,mathrsfs}
\usepackage{enumitem}
\usepackage{hyperref}
\usepackage{cleveref}
\usepackage{microtype}
\usepackage{authblk}

\hypersetup{
  colorlinks=true,
  linkcolor=blue,
  citecolor=blue,
  urlcolor=blue,
  pdftitle={Exact ReLU realization of binary affine refinement iterates via reflection folding and cone switching},
  pdfauthor={Boldsaikhan Bolorkhuu and Tsogtgerel Gantumur},
  pdfkeywords={ReLU networks, binary refinement, affine refinement, reflection folding, residual memory, tent map, continuous piecewise linear functions}
}

\newtheorem{theorem}{Theorem}[section]
\newtheorem{proposition}[theorem]{Proposition}
\newtheorem{lemma}[theorem]{Lemma}
\newtheorem{corollary}[theorem]{Corollary}
\theoremstyle{definition}

\newtheorem{remark}[theorem]{Remark}
\newtheorem{example}[theorem]{Example}

\newcommand{\R}{\mathbb{R}}
\newcommand{\N}{\mathbb{N}}
\newcommand{\Z}{\mathbb{Z}}
\newcommand{\T}{\mathbb{T}}
\newcommand{\trans}{\mathrm{T}}
\newcommand{\emb}{\iota_\Gamma}
\newcommand{\mem}{\mathcal M}
\newcommand{\supp}{\operatorname{supp}}
\newcommand{\vect}{\operatorname{Vec}}
\newcommand{\ReLU}{\operatorname{ReLU}}
\newcommand{\Ups}{\Upsilon}
\newcommand{\Lip}{\operatorname{Lip}}
\newcommand{\clip}{\operatorname{clip}}
\newcommand{\e}{\mathbf{e}}
\newcommand{\doi}[1]{\href{https://doi.org/#1}{DOI: #1}}

\title{Exact ReLU realization of binary affine refinement iterates via reflection folding and cone switching}
\author[1,2,3]{Tsogtgerel Gantumur}
\author[2]{Boldsaikhan Bolorkhuu}
\affil[1]{McGill University}
\affil[2]{National University of Mongolia}
\affil[3]{Institute of Mathematics and Digital Technology, Mongolian Academy of Sciences}
\date{July 25, 2026}

\begin{document}
\maketitle

\begin{abstract}
We study vector-valued binary affine refinement operators with finitely
supported matrix masks and compactly supported continuous piecewise linear
input and forcing data.  We prove that every finite refinement iterate admits
an exact ReLU realization of fixed width and depth linear in the number of
iterations.  No separation of the forcing profile from the binary cell seams
is required.

The main mechanism is universal reflection doubling.  Pairing each residual
profile with its reflection replaces the two binary transition matrices by one
fixed block matrix together with a fixed swap involution.  The cell-seam
identity makes the two branch candidates agree at the tent fold, while their
swap-odd component is bounded linearly by the distance to the fold.  This
permits exact branch selection by a fixed continuous piecewise linear cone
switch, without multiplication by a variable selector.

The resulting primal recursion requires the residual orbit in reverse order.
We obtain exact backward replay from the residual memory controller developed
previously for affine refinement, interpreted here through the reflection
quotient of circle doubling.  The construction propagates the full vectorized
profiles rather than decomposing the input and forcing into reference atoms.
We also treat stage-dependent forcing from a fixed finite-dimensional family
and show that genuine reflection equivariance reduces the doubled cascade to a
single parity sector.
\end{abstract}

\medskip
\noindent\textbf{2020 Mathematics Subject Classification.}
Primary 41A30; Secondary 68T07, 42C40.

\smallskip
\noindent\textbf{Keywords.}
ReLU networks; affine refinement; binary cascade; reflection folding;
cone switching; residual memory; tent map; exact realization.

\section{Introduction}
\label{sec:introduction}

\subsection{Background and motivation}

Neural-network approximation theory seeks structural explanations for the expressive power of deep
networks and, in particular, for the role of depth in representing highly recursive or highly
oscillatory functions \cite{approx,nnapprox}.  Refinement operators provide a particularly clean
class of examples.  In the scalar binary setting, Daubechies et al. proved that finite refinement
iterates of compactly supported continuous piecewise linear functions admit exact ReLU realizations
of fixed width and depth growing linearly with the number of refinement steps \cite{source}.
Subsequent work developed exact loop controllers for homogeneous vector-valued \(M\)-ary
refinement and a residual memory controller for affine forcing \cite{loop,affine-memory}.

The present paper addresses a peculiarity of the binary affine problem.  In the
\(M\)-ary construction of \cite{affine-memory}, when \(M\ge3\) one can choose a
nonzero offset compatible with the residual dynamics, and ordinary and shifted
frames provide seam-safe forcing readouts.  For \(M=2\), no such nonzero offset
exists, so that argument requires the forcing profile to be separated from the
ordinary cell seams.  We remove this restriction using the reflection geometry
of the two binary branches.

The starting observation is the familiar tent-map folding of binary residual dynamics.  Let
\[
  Q(x)=\lfloor 2x\rfloor,
  \qquad
  R(x)=2x-Q(x),
  \qquad x\in[0,1),
\]
and define the tent map
\begin{equation}
\label{eq:intro-tent}
  \tau(x)=
  \begin{cases}
    2x, & 0\le x\le \frac12,\\
    2-2x, & \frac12\le x\le1.
  \end{cases}
\end{equation}
Then \(\tau^m(x)\in\{R^m(x),1-R^m(x)\}\) for \(x\in[0,1)\) and \(m\ge0\).  Consequently, if a scalar
profile \(h\) is reflection-even, in the sense that
\(h(1-t)=h(t)\), then \(h(R^m(x))=h(\tau^m(x))\).
This elementary identity is closely related to the sawtooth constructions used in
depth-separation arguments~\cite{telgarsky}.

The complementary reflection-odd case already contains the local switching mechanism used
below.  Suppose that \(h(1-t)=-h(t)\) and that \(h\) satisfies the natural endpoint compatibility
condition \(h(0)=h(1)=0\).  On the left branch of the tent map one has
\(h(R(x))=h(\tau(x))\), whereas on the right branch one has
\(h(R(x))=-h(\tau(x))\).  The graph of \(x\mapsto h(\tau(x))\) lies in a double cone
with vertex at \((\frac12,0)\).  One can therefore define a CPwL map
\(\mathcal C(x,y)\) that returns \(y\) on the left cone and \(-y\) on the right cone,
with a continuous piecewise linear transition elsewhere.  In particular,
\[
  h(R(x))=\mathcal C\bigl(x,h(\tau(x))\bigr).
\]
We refer to this elementary device as a cone switch.

For a matrix cascade, symmetry of a terminal scalar factor does not remove the
branch dependence of the matrix product.  The decisive step is therefore to
double the entire residual fiber.  For a vectorized residual profile \(G\), set
\[
  G^{\#}(x)
  =
  \begin{bmatrix}
    G(x)\\
    G(1-x)
  \end{bmatrix}.
\]
The two binary branches are then represented by one fixed block matrix, with
the right branch followed by a fixed swap involution.  At the tent fold, the
two candidates agree by the cell-seam identity satisfied by the vectorization
of a continuous global function.  Lipschitz continuity consequently bounds
the swap-odd component by a constant multiple of
\(\lvert x-\frac12\rvert\).  After decomposing the doubled fiber into the even
and odd subspaces of the swap, the scalar cone flip applies coordinatewise to
the odd component.  The discontinuous binary matrix selector is thereby
replaced by one fixed block matrix and one fixed CPwL switching module.

The resulting folded recursion is primal.  Although the functional iteration
proceeds forward in the refinement level, its pointwise evaluation is
inside-out: to evaluate the \(n\)-th iterate at \(x\), one first evaluates the
initial profile at \(\tau^n(x)\) and then applies the affine updates along
\[
  \tau^{n-1}(x),\ldots,\tau(x),x.
\]
Storing the entire residual orbit would make the width grow with \(n\), while
recomputing it at every stage would destroy linear depth.  We therefore use the
residual memory controller of \cite{affine-memory}, with circle doubling as a
two-sheeted lift of the tent dynamics.  An injective skew product on a
polygonal loop stores the missing branch information, and its CPwL inverse
replays the folded residual orbit exactly in reverse order.

The two mechanisms have complementary roles.  Reflection doubling and cone
switching remove the discontinuous branch selector, while residual memory
supplies the reverse chronology required by the primal recursion.  The combined
construction imposes no seam-separation condition on the forcing profile.

Memory is not intrinsically necessary in the homogeneous problem.  Besides
the earlier forward constructions, the cone switch also admits an adjoint
homogeneous realization.  For endpoint-zero reference atoms, the swap-odd
adjoint candidate satisfies the same cone estimate, so the adjoint cascade
can be propagated forward without residual memory.

We nevertheless use the primal folded-memory formulation throughout.  It
gives a unified treatment of the homogeneous and affine problems, displays
the reflection-folding mechanism directly, and propagates the complete
vectorized profiles without an atomic decomposition.

\subsection{Main results}

For positive integers \(w,D,d,N\), let
\(\Ups_{w,D}(\ReLU;d,N)\) denote the class of outputs of fully connected
ReLU networks with width \(w\), depth \(D\), input dimension \(d\), and
output dimension \(N\).

We study the binary affine refinement operator
\begin{equation}
\label{eq:intro-W}
  (W\gamma)(t)
  =
  \sum_{j\in\Z}A_j\gamma(2t-j)+B(t),
\end{equation}
where \(A_j\in\R^{p\times p}\) and only finitely many \(A_j\) are nonzero.
We assume that the homogeneous part preserves a fixed support window
\([0,L]\), with \(L\in\N\).

\begin{theorem}[Main theorem, informal form]
\label{thm:intro-main}
Suppose that the homogeneous part of \(W\) preserves \([0,L]\), and that
\(\gamma,B:\R\to\R^p\) are CPwL and supported in this window.  Then there
exist constants \(C_0,C_1>0\), independent of \(n\), such that
\[
  W^n\gamma
  \in
  \Ups_{C_0,C_1n}(\ReLU;1,p),
  \qquad n\ge1.
\]
The realizing networks may be chosen with weights and biases bounded by
\(C_2\Lambda^n\), where \(C_2,\Lambda>0\) depend only on the mask, the
support window, and the fixed CPwL data.
\end{theorem}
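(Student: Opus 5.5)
The plan is to vectorize, then fold, then replay. Put \(G(x)=(\gamma(x),\gamma(x+1),\dots,\gamma(x+L-1))\in\R^{pL}\) for \(x\in[0,1]\), and let \(F\) be the analogous vectorization of \(B\). Because the homogeneous part preserves \([0,L]\), the refinement acts fiberwise:
\[
  (W^n\gamma)^{\vect}(x)=T_{Q(x)}\,(W^{n-1}\gamma)^{\vect}(R(x))+F(x),
\]
with two fixed binary transition matrices \(T_0,T_1\in\R^{pL\times pL}\) built from the mask. The target \(W^n\gamma(t)\) on \([0,L]\) is recovered from the vectorized profile at \(x=t-k\) by a fixed CPwL readout in \(t\). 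Unrolling the recursion expresses the \(n\)-th iterate at \(x\) through the initial profile at \(R^n(x)\) and affine updates along \(R^{n-1}(x),\dots,x\).

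The first main step is reflection doubling. Pass to \(G^{\#}(x)=(G(x),G(1-x))\) and define \(F^{\#}\) the same way. Then the two branches are represented by a single fixed block matrix \(\mathbb T=\begin{bmatrix}T_0&0\\0&T_1\end{bmatrix}\), suitably arranged, together with the swap \(S\), so that
\[
  G_n^{\#}(x)=\mathbb T\,G_{n-1}^{\#}(\tau x)+F^{\#}(x) \quad\text{for } x\le\tfrac12,
\]
while for \(x\ge\tfrac12\) the same expression holds with \(S\) applied to the \(\mathbb T\) term. I will verify this by direct index bookkeeping, using \(R(1-x)=1-R(x)\) off the seams.

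Next, decompose \(\R^{2pL}\) into the \(\pm1\) eigenspaces of \(S\). The candidate \(Y(x)=\mathbb T G_{n-1}^{\#}(\tau x)\) must be sent to \(Y\) on the left half and to \(SY\) on the right. These two values differ only in the swap-odd part \(Y_-\), which has to be negated on the right. The key estimate is \(|Y_-(x)|\le K_n|x-\tfrac12|\). It follows because the two candidates coincide at \(x=\tfrac12\) by the cell-seam identity for vectorizations of continuous functions, and \(Y\) is Lipschitz with constant at most \(C\Lambda^n\). With this estimate, a fixed cone switch \(\mathcal C_{K}(x,y)\) applied coordinatewise to \(Y_-\) realizes the selection exactly. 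The switch needs only a few ReLUs, and its slope parameter \(K_n\) is absorbed into the weight bound \(C_2\Lambda^n\).

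Each stage is therefore a fixed-width CPwL map \((x,Z)\mapsto \mathcal C(x,\mathbb T Z)+F^{\#}(x)\) applied along the orbit \(\tau^n x,\dots,x\) in reverse order. Here \(F^{\#}\) is a fixed CPwL function of \(x\) with fixed width and depth. Because no readout ever separates \(B\) from the seams, no seam-separation condition is needed.

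The second main step, and the one I expect to be the real obstacle, is producing the reversed orbit with bounded width. I will invoke the residual memory controller of \cite{affine-memory}, which is assumed available. First run forward \(n\) steps of an injective CPwL skew product over circle doubling, a two-sheeted lift of \(\tau\), on a polygonal loop. This encodes the branch bits and takes \(x\) to a state carrying \(\tau^n x\). Then apply its CPwL inverse step by step, which recovers \(\tau^{n-1}x,\dots,x\) exactly, one per layer. Interleave these steps with the folded update, initialized by the fixed network for \(\gamma^{\#}(\tau^n x)\).

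The delicate points are three:
\begin{itemize}[nosep]
\item checking that the controller's inverse returns the tent orbit, and not the doubling orbit, through the reflection quotient;
\item keeping the seam cases consistent at dyadic points;
\item carrying the state \(Z\), which is bounded by \(C\Lambda^n\), through ReLU layers exactly, by splitting it into positive and negative parts.
\end{itemize}
The total width is a constant \(C_0\) and the depth is \(C_1n\). Weight growth comes only from the Lipschitz constants in the cone switch and from the controller, both of which are geometric in \(n\).
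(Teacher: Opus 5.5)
Your proposal is correct and follows the paper's proof essentially step for step: vectorization with \(T_0,T_1\), reflection doubling with \(\widehat T=\operatorname{diag}(T_0,T_1)\) and the swap, and the fold-cone estimate from the cell-seam identity combined with the exponential Lipschitz bound obtained from \(L_{m+1}\le 2T_*L_m+L_b\); then coordinatewise cone flips on the swap-odd part, backward replay by the residual memory controller (whose affine folded readout on the diamond loop recovers \(\tau^j(x)\)), and clamped gluing for globalization. The steps you leave implicit are routine and supplied by the paper: the Lipschitz recursion, and the fact that globalization evaluates the unit-interval network in parallel at \(\clip(t-k)\) and telescopes using seam compatibility. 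Also, the controller's weights are fixed, so all coefficient growth comes from the cone scales alone.
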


The theorem is exact in real arithmetic; no claim is made about
finite-precision stability or bit complexity.  Unlike the atomic
decompositions used in earlier refinement constructions, the proof propagates
the complete finite-dimensional vectorization of the input and forcing
profiles on \([0,1]\).  The global function is then recovered from its unit-cell
components by the standard clamped-gluing argument.  The same architecture
also accommodates stage-dependent forcing from a fixed finite-dimensional
family, while a compatible reflection symmetry reduces the universal doubled
cascade to a single parity sector.

\subsection{Contribution and relation to earlier constructions}

The homogeneous scalar binary realization theorem is due to \cite{source},
while homogeneous vector-valued refinement was treated by the loop-controller
construction of \cite{loop}.  The residual memory controller used here is the
binary specialization of the affine construction in
\cite{affine-memory}.  The new contribution consists of the following three
mechanisms.

\begin{enumerate}[label=(\roman*),leftmargin=2.5em]
  \item \emph{Universal reflection doubling.}
  Adjoining the reflected residual profile places every binary matrix
  cascade in a doubled fiber carrying a fixed swap involution.

  \item \emph{Folded matrix recursion.}
  In the doubled fiber, the two binary transition matrices are replaced by
  one fixed block matrix, followed on the right branch of the tent map by
  the swap involution.

  \item \emph{Cone switching.}
  The swap-odd component is bounded by a constant multiple of the distance
  to the tent fold.  A fixed CPwL switch can therefore choose exactly
  between the identity and the swap on the resulting quantitative cone,
  without multiplying the evolving state by a variable selector.
\end{enumerate}

The folded recursion is primal and must be evaluated in reverse residual
order.  We handle this chronology using the residual memory controller of
\cite{affine-memory}.  Its polygonal loop provides a two-sheeted lift of the
tent dynamics, while a global affine readout recovers the tent coordinate.
Thus the earlier memory mechanism supplies backward replay, whereas
reflection doubling and cone switching provide the new treatment of the
binary branch ambiguity.

\subsection{Organization}

Section~\ref{sec:preliminaries} introduces binary vectorization, transition
matrices, seam compatibility, and universal reflection doubling.
Section~\ref{sec:cone-switch} constructs the cone switch and establishes the
quantitative estimate needed for exact switching.
Section~\ref{sec:memory} recalls the binary residual memory controller and
identifies its folded tent-map readout.
Section~\ref{sec:unit-realization} combines these ingredients to realize the
affine cascade on one unit interval.
Section~\ref{sec:global} proves the global theorem and its stage-dependent
extension.
Section~\ref{sec:reflection-equivariant} treats genuine reflection symmetry
and the resulting reduction of the cascade fiber.
Appendix~\ref{app:adjoint-homogeneous} records a memory-free adjoint
realization for homogeneous atoms, while Appendix~\ref{app:fan-switch}
shows that the binary cone switch is the two-branch case of a fixed-arity
fan-switch construction based on polyphase shifts.

\section{Binary vectorization and reflection folding}
\label{sec:preliminaries}

\subsection{Network and CPwL conventions}

A map between finite-dimensional Euclidean spaces is called CPwL if it is continuous and affine on
each cell of a locally finite polyhedral subdivision.  Every fixed CPwL module used below admits an
exact finite ReLU realization.  For one-dimensional scalar maps this follows from the hinge
representation, while fixed-dimensional polyhedral modules may be represented by the standard
continuous piecewise affine finite-element construction \cite{relu-fem}.  
When a map defined on \([0,1]\) is said to have a ReLU realization, we
mean that it is the restriction to \([0,1]\) of a globally defined network output.
We use repeatedly that
affine precomposition, finite parallelization, finite linear combinations, and composition with a fixed
CPwL module preserve fixed width up to constant factors and preserve depth \(O(n)\).

All norms on fixed finite-dimensional spaces are equivalent.  We use the max norm for explicit
estimates, together with the induced matrix norm.  Constants are not optimized.

\subsection{Support window and vectorization}

Let
\begin{equation}
\label{eq:V-definition}
(Vf)(t)=\sum_{j\in\Z}A_jf(2t-j),
\qquad A_j\in\R^{p\times p},
\end{equation}
and suppose only finitely many \(A_j\) are nonzero.  Fix an integer \(L\ge1\) and assume the support
window condition
\begin{equation}
\label{eq:support-window}
\supp f\subset[0,L]
\quad\Longrightarrow\quad
\supp Vf\subset[0,L].
\end{equation}
The compactly supported input and forcing data are assumed to lie in this window.  Continuity on
\(\R\) then implies that they vanish at \(0\) and \(L\).

Set
\(E=(\R^p)^L\),
and for a continuous \(f:\R\to\R^p\) supported in \([0,L]\), define its vectorization by
\begin{equation}
\label{eq:vectorization}
(\vect f)(x)
=
\begin{bmatrix}
f(x)\\f(x+1)\\ \vdots\\f(x+L-1)
\end{bmatrix}
\in E,
\qquad x\in[0,1].
\end{equation}
We index the blocks of \(E\) by \(k=0,\ldots,L-1\).

For \(q\in\{0,1\}\), define the block transition matrix \(T_q:E\to E\) by
\begin{equation}
\label{eq:Tq-definition}
(T_qu)_k
=
\sum_{\ell=0}^{L-1}A_{q+2k-\ell}u_\ell,
\qquad k=0,\ldots,L-1.
\end{equation}
The finite support of the mask makes each sum finite, and the support-window hypothesis ensures
that this finite vectorization contains all required coordinates.

Throughout, the digit representation \(2x=Q(x)+R(x)\) is used only for
\(x\in[0,1)\); the branch formulas on the closed half intervals are
understood by continuous extension.

\begin{lemma}[Binary cascade identity]
\label{lem:binary-cascade}
Let \(G=\vect f\).  For \(x\in[0,1]\), one has
\begin{equation}
\label{eq:binary-cascade}
\vect(Vf)(x)
=
\begin{cases}
T_0G(2x),&0\le x\le\frac12,\\
T_1G(2x-1),&\frac12\le x\le1.
\end{cases}
\end{equation}
The two expressions agree at \(x=\frac12\).
\end{lemma}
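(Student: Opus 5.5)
The plan is a direct block-by-block computation from the definitions \eqref{eq:V-definition}, \eqref{eq:vectorization} and \eqref{eq:Tq-definition}, followed by a continuity argument at the fold.

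\textbf{Step 1: the block identity.} Fix a block index \(k\in\{0,\ldots,L-1\}\) and \(x\in[0,1]\). By definition,
\[
(\vect(Vf))_k(x)=(Vf)(x+k)=\sum_{j\in\Z}A_jf(2x+2k-j).
\]
On the left half I write \(2x=y\) with \(y\in[0,1]\), and on the right half \(2x=1+y\) with \(y\in[0,1]\). In both cases \(2x=q+y\), with \(q=0\) or \(q=1\) respectively. Each argument becomes \(y+(q+2k-j)\). I reindex by \(\ell=q+2k-j\), so that \(j=q+2k-\ell\), and obtain
\[
(Vf)(x+k)=\sum_{\ell\in\Z}A_{q+2k-\ell}\,f(y+\ell).
\]

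\textbf{Step 2: truncating the sum.} Since \(\supp f\subset[0,L]\) and \(y\in[0,1]\), the term \(f(y+\ell)\) can be nonzero only for \(\ell\in\{-1,0,\ldots,L\}\). The two boundary cases are harmless:
\begin{itemize}
\item for \(\ell=-1\), one has \(y-1\le 0\), and \(f(y-1)\ne0\) would force \(y=1\), i.e.\ \(f(0)\), which vanishes by continuity;
\item for \(\ell=L\), similarly \(f(y+L)\ne0\) would force \(y=0\), i.e.\ \(f(L)=0\).
\end{itemize}
Hence only \(\ell=0,\ldots,L-1\) contribute, and these terms are exactly the blocks \(G_\ell(y)=f(y+\ell)\). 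The sum is therefore \((T_qG(y))_k\), with \(y=2x\) or \(y=2x-1\), which gives both branches of \eqref{eq:binary-cascade}.

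The support-window hypothesis \eqref{eq:support-window} is not even needed for the identity itself. Only \(\supp f\subset[0,L]\) is used, so that \(G\) captures every relevant value. The window hypothesis is what guarantees that \(\vect(Vf)\) is again a complete description of \(Vf\).

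\textbf{Step 3: agreement at the fold.} At \(x=\tfrac12\), both expressions equal \((Vf)(\tfrac12+k)\), since the computation in Step 1 is valid there with either choice \(q=0,\ y=1\) or \(q=1,\ y=0\). Consequently \(T_0G(1)=T_1G(0)\). This is precisely the cell-seam identity \(G_\ell(1)=G_{\ell+1}(0)\) reappearing after the shift in the mask index.

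\textbf{Main obstacle.} The only delicate point is the endpoint bookkeeping in Step 2: the closed half-intervals make the indices \(\ell=-1\) and \(\ell=L\) appear at \(y=1\) and \(y=0\). This is handled by the vanishing of \(f\) at \(0\) and \(L\), which follows from continuity on \(\R\).
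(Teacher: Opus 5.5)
Your proof is correct and is essentially the paper's argument: the same block-by-block reindexing \(\ell=q+2k-j\), matched against the definition of \(T_q\). The only difference is endpoint bookkeeping. The paper uses the half-open split \(2x=Q(x)+R(x)\) on \([0,1)\) and obtains agreement at \(x=\frac12\) from continuity of \(Vf\). You work on the closed half-intervals and discard the boundary indices \(\ell=-1,L\) using \(f(0)=f(L)=0\), which makes the fold agreement immediate and spells out the truncation to \(\ell=0,\ldots,L-1\) that the paper leaves implicit.
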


\begin{proof}
For \(x\in[0,1)\), write \(2x=Q(x)+R(x)\), where \(Q(x)\in\{0,1\}\) and
\(R(x)\in[0,1)\).  For the \(k\)-th component, we have
\[
(Vf)(x+k)
=
\sum_jA_jf(R(x)+Q(x)+2k-j).
\]
Setting \(\ell=Q(x)+2k-j\) gives
\[
(Vf)(x+k)
=
\sum_{\ell=0}^{L-1}A_{Q(x)+2k-\ell}G_\ell(R(x)),
\]
which is \((T_{Q(x)}G(R(x)))_k\).  This yields the two branch formulas.  Since \(Vf\) is continuous,
their one-sided values at \(x=\frac12\) agree.
\end{proof}

The endpoint agreement in the preceding proof will be used repeatedly.

\begin{lemma}[Cell-seam identity]
\label{lem:cell-seam}
If \(G=\vect f\) for a continuous compactly supported function \(f\), then it holds that
\begin{equation}
\label{eq:cell-seam}
T_0G(1)=T_1G(0).
\end{equation}
\end{lemma}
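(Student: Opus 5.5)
The quickest route is to reuse Lemma~\ref{lem:binary-cascade}: its final assertion says the two branch formulas agree at \(x=\frac12\), and evaluated there they give exactly \(T_0G(1)\) and \(T_1G(0)\). The only point that needs care is that Lemma~\ref{lem:binary-cascade} assumes the support-window hypothesis \eqref{eq:support-window}, so that \(Vf\) is again supported in \([0,L]\) and \(\vect(Vf)\) captures every relevant value. We therefore also give a direct index computation that only uses \(\supp f\subset[0,L]\) and continuity of \(f\).

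For the direct argument, fix a block index \(k\in\{0,\dots,L-1\}\). By \eqref{eq:Tq-definition} and \eqref{eq:vectorization},
\[
  (T_0G(1))_k=\sum_{\ell=0}^{L-1}A_{2k-\ell}\,f(1+\ell),
  \qquad
  (T_1G(0))_k=\sum_{\ell=0}^{L-1}A_{1+2k-\ell}\,f(\ell).
\]
In the first sum we set \(m=\ell+1\), so \(m\) ranges over \(1,\dots,L\) and the terms are \(A_{2k+1-m}f(m)\). The second sum has terms \(A_{2k+1-\ell}f(\ell)\) with \(\ell\) ranging over \(0,\dots,L-1\). Both sums are therefore \(\sum_m A_{2k+1-m}f(m)\) over the integer nodes, and they differ only in the endpoint terms: the first contains \(m=L\) and not \(m=0\), the second the reverse.

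These endpoint terms vanish. Since \(f\) is continuous on \(\R\) and supported in \([0,L]\), we have \(f(0)=f(L)=0\), as noted after \eqref{eq:support-window}. Hence both sums equal \(\sum_{m\in\Z}A_{2k+1-m}f(m)\), which is \((Vf)(k+\frac12)\). Since this holds for every \(k\), the identity \eqref{eq:cell-seam} follows.

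There is no real obstacle. The only care needed is the index shift and the endpoint vanishing, which is precisely where continuity of the global function enters. Without it a vectorized profile could have \(G_{L-1}(1)\ne0\) or \(G_0(0)\ne0\), and the identity would fail.
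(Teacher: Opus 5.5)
Your proof is correct, and its main part takes a genuinely different route from the paper's.

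The paper proves the identity in one line from \Cref{lem:binary-cascade}. Both sides are the one-sided values of \(\vect(Vf)\) at \(x=\frac12\), so the seam identity is just continuity of \(Vf\) at the cell midpoints \(k+\frac12\), inherited from continuity of \(f\). The paper assumes \eqref{eq:support-window} throughout, so the caveat you raise about that lemma is not a real obstacle.

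Your direct computation uses only integer-node values. After the shift \(m=\ell+1\), the two sums differ only by \(A_{2k+1-L}f(L)\) and \(A_{2k+1}f(0)\). This gives a slightly sharper statement: the identity holds for every \(f\) supported in \([0,L]\) with \(f(0)=f(L)=0\). No continuity is needed elsewhere, and the window-preservation property of \(V\) is never used.

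Your computation is also exactly the binary case of the shift argument the paper itself uses later in \Cref{lem:fan-fold-agreement}. With \(M=2\), \eqref{eq:G-endpoint-shift} encodes \(f(L)=0\) and \eqref{eq:Sigma-D} encodes \(f(0)=0\). Together they give \(T_0G(1)=\mathsf T\Sigma_0\mathsf DG(0)=\mathsf T\Sigma_1G(0)=T_1G(0)\).

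The paper's route is more conceptual and matches its reading of the seam identity as ``inherited from the continuity of the underlying function.'' Yours is self-contained and isolates exactly which values matter.

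One small overstatement in your closing remark: without endpoint vanishing the identity \emph{can} fail for generic masks, but it need not. For example, it survives if the relevant coefficients \(A_{2k+1-L}\) and \(A_{2k+1}\) vanish.
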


\begin{proof}
Both sides are the value of \(\vect(Vf)\) at \(x=\frac12\), obtained from the left and right branch
formulas in \eqref{eq:binary-cascade}.
\end{proof}

\subsection{Affine vectorized recursion}

Let
\[
(Wf)(t)=Vf(t)+B(t),
\]
where \(B:\R\to\R^p\) is compactly supported and CPwL, and set
\(b=\vect B\).
If \(f_m=W^mf_0\) and \(G_m=\vect f_m\), then we have
\begin{equation}
\label{eq:affine-vector-recursion}
G_{m+1}(x)
=
\begin{cases}
T_0G_m(2x)+b(x),&0\le x\le\frac12,\\
T_1G_m(2x-1)+b(x),&\frac12\le x\le1.
\end{cases}
\end{equation}
Every \(G_m\) is CPwL and satisfies the seam identity
\begin{equation}
\label{eq:Gm-seam}
T_0G_m(1)=T_1G_m(0).
\end{equation}

\subsection{Reflected doubling for endpoint-compatible scalar profiles}

The reflection-even and reflection-odd cases can be treated simultaneously.
For a scalar CPwL profile \(h:[0,1]\to\R\) satisfying \(h(0)=h(1)\), 
define
\[
  h^\#(x)
  :=
  \begin{bmatrix}
    h(x)\\
    h(1-x)
  \end{bmatrix},
\]
and let \(J_2(a,b)=(b,a)\).  Thus
\(h^\#(1-x)=J_2h^\#(x)\).
For this scalar prototype, set \(R(1)=1\), and for \(m\ge0\), let
\[
  H_m(x)
  :=
  \begin{bmatrix}
    h(R^m(x))\\
    h(R^m(1-x))
  \end{bmatrix}.
\]
Then each \(H_m\) is CPwL, \(H_0=h^\#\), and
\begin{equation}
\label{eq:scalar-folded-recursion}
  H_{m+1}(x)
  =
  \begin{cases}
    H_m(\tau(x)),&0\le x\le\frac12,\\
    J_2H_m(\tau(x)),&\frac12\le x\le1.
  \end{cases}
\end{equation}
The two candidates in \eqref{eq:scalar-folded-recursion} agree at the
turning point.
Since \(H_m\) is CPwL, its
swap-odd component lies in a double cone at the fold, and the cone switch realizes the two
branches by one fixed CPwL update.  Thus a general endpoint-compatible profile is handled by
a single doubled state; separate even and odd controllers are unnecessary.  The matrix
construction below is the corresponding fiber-valued version of this recursion.

This scalar discussion serves only as a prototype for the folding and cone-switching
mechanism.  The realization theorem below does not assume a scalar terminal profile or
the condition \(h(0)=h(1)\).  In the vectorized matrix cascade, the corresponding
compatibility is instead the cell-seam identity \eqref{eq:cell-seam}, inherited from the
continuity of the underlying function on the line.

\subsection{Universal reflection doubling}

Let \(D=pL\), and set
\(\widehat E=E\oplus E\cong\R^{2D}\).
Define the swap involution \(\mathcal J:\widehat E\to\widehat E\) by
\begin{equation}
\label{eq:swap}
\mathcal J
\begin{bmatrix}u\\v\end{bmatrix}
=
\begin{bmatrix}v\\u\end{bmatrix},
\qquad \mathcal J^2=I.
\end{equation}
For a residual profile \(G:[0,1]\to E\), define
\begin{equation}
\label{eq:G-sharp}
G^{\#}(x)
=
\begin{bmatrix}
G(x)\\G(1-x)
\end{bmatrix}.
\end{equation}
Then \(G^{\#}(1-x)=\mathcal JG^{\#}(x)\).
Introduce the fixed block matrix
\begin{equation}
\label{eq:T-hat}
\widehat T
=
\begin{bmatrix}
T_0&0\\0&T_1
\end{bmatrix},
\end{equation}
and the doubled forcing
\begin{equation}
\label{eq:b-sharp}
b^{\#}(x)
=
\begin{bmatrix}
b(x)\\b(1-x)
\end{bmatrix}.
\end{equation}

\begin{proposition}[Folded doubled recursion]
\label{prop:folded-recursion}
Let \(G_m^{\#}\) be the reflected double of the vectorized affine iterate.  Then we have
\begin{equation}
\label{eq:folded-recursion}
G_{m+1}^{\#}(x)
=
\begin{cases}
\widehat T G_m^{\#}(\tau(x))+b^{\#}(x),&0\le x\le\frac12,\\[1mm]
\mathcal J\widehat T G_m^{\#}(\tau(x))+b^{\#}(x),&\frac12\le x\le1.
\end{cases}
\end{equation}
At \(x=\frac12\), the two matrix candidates agree:
\begin{equation}
\label{eq:fold-agreement}
\widehat T G_m^{\#}(1)
=
\mathcal J\widehat T G_m^{\#}(1).
\end{equation}
\end{proposition}

\begin{proof}
Suppose first that \(0\le x\le\frac12\), so \(\tau(x)=2x\).  The first component of the doubled
recursion is
\[
T_0G_m(\tau(x))+b(x).
\]
Since \(1-x\ge\frac12\), the second component is
\[
T_1G_m(2(1-x)-1)+b(1-x)
=T_1G_m(1-\tau(x))+b(1-x).
\]
This is the first branch in \eqref{eq:folded-recursion}.

If \(\frac12\le x\le1\), then \(\tau(x)=2-2x\).  The first component is
\[
T_1G_m(2x-1)+b(x)
=T_1G_m(1-\tau(x))+b(x),
\]
and the second component is
\[
T_0G_m(2-2x)+b(1-x)
=T_0G_m(\tau(x))+b(1-x).
\]
This is obtained by applying \(\mathcal J\) to
\(\widehat T G_m^{\#}(\tau(x))\), proving the right branch formula.

Finally, we have
\[
\widehat T G_m^{\#}(1)
=
\begin{bmatrix}
T_0G_m(1)\\T_1G_m(0)
\end{bmatrix}.
\]
The two components agree by \eqref{eq:Gm-seam}, so the vector is fixed by \(\mathcal J\).
\end{proof}

\section{Exact cone switching at the fold}
\label{sec:cone-switch}

The recursion \eqref{eq:folded-recursion} replaces the discontinuous choice between \(T_0\) and
\(T_1\) by the choice between the identity and the fixed involution \(\mathcal J\).  The candidates
agree at the fold.  This section constructs a global CPwL map that performs the switch exactly on
the cone of states generated by the cascade.

\subsection{A scalar cone flip}

Let \(\sigma(s)=\max\{s,0\}\).  For \(K>0\), define
\begin{equation}
\label{eq:scalar-cone-flip}
\phi_K(s,a)
=
-a+\sigma(a-Ks)-\sigma(-a-Ks),
\qquad (s,a)\in\R^2.
\end{equation}

\begin{lemma}[Scalar cone flip]
\label{lem:scalar-cone-flip}
The map \(\phi_K\) is CPwL and satisfies
\begin{align}
\phi_K(s,a)&=a,
&&s\le0,\quad |a|\le K|s|,
\label{eq:cone-left}\\
\phi_K(s,a)&=-a,
&&s\ge0,\quad |a|\le K|s|.
\label{eq:cone-right}
\end{align}
Moreover, \(\phi_K(s,-a)=-\phi_K(s,a)\).
\end{lemma}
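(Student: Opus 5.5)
The plan is to verify each claim directly from the formula \eqref{eq:scalar-cone-flip}. For continuity and piecewise linearity, note that \(\phi_K\) is a finite linear combination of the linear map \((s,a)\mapsto -a\) and two compositions of \(\sigma\) with linear forms in \((s,a)\). It is therefore continuous and affine on each cell of the fan cut out by the two lines \(a=Ks\) and \(a=-Ks\), so it is CPwL. This also exhibits an explicit ReLU realization with two hidden units plus a linear skip. The term \(-a\) can itself be written as \(\sigma(-a)-\sigma(a)\) if a pure ReLU layer is wanted.

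For \eqref{eq:cone-left}, take \(s\le0\) and \(|a|\le K|s|=-Ks\). Then \(a-Ks\ge a+|a|\ge0\) and \(-a-Ks\ge -a+|a|\ge0\), so both ReLUs are in their linear regime. This gives
\[
\phi_K(s,a)=-a+(a-Ks)-(-a-Ks)=a .
\]
For \eqref{eq:cone-right}, take \(s\ge0\) and \(|a|\le Ks\). Then \(a-Ks\le a-|a|\le0\) and \(-a-Ks\le -a-|a|\le0\), so both ReLU terms vanish and \(\phi_K(s,a)=-a\). On the boundary rays \(a=\pm K|s|\), the relevant ReLU arguments are zero, so the inequalities are consistent there and no separate case is needed.

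For oddness in \(a\), replace \(a\) by \(-a\) in \eqref{eq:scalar-cone-flip}:
\[
\phi_K(s,-a)=a+\sigma(-a-Ks)-\sigma(a-Ks),
\]
which is exactly \(-\phi_K(s,a)\) because the two ReLU terms exchange roles.

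There is no real obstacle. The only point needing care is the sign bookkeeping in the two cone cases, specifically that \(|a|\le K|s|\) forces both ReLU arguments to have the same sign, nonnegative when \(s\le0\) and nonpositive when \(s\ge0\). That is where I would be careful.
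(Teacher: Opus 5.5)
Your proposal is correct and follows essentially the same route as the paper: on each cone both ReLU arguments share a sign (nonnegative for $s\le0$, nonpositive for $s\ge0$), which reduces $\phi_K$ to $a$ or $-a$. Oddness then follows by exchanging the two ReLU terms. Your explicit justification of the CPwL property and the sign bookkeeping via $a\pm K|s|\gtrless \pm a+|a|$ just spell out steps the paper leaves implicit.
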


\begin{proof}
If \(s\le0\) and \(|a|\le-Ks\), then both \(a-Ks\) and \(-a-Ks\) are nonnegative.  Hence
\[
\phi_K(s,a)
=-a+(a-Ks)-(-a-Ks)=a.
\]
If \(s\ge0\) and \(|a|\le Ks\), then both \(a-Ks\) and \(-a-Ks\) are nonpositive, so
\(\phi_K(s,a)=-a\).  The oddness in \(a\) follows directly from the formula.
\end{proof}

\subsection{The involution cone switch}

Let \(Y\) be a finite-dimensional real vector space and let
\(\mathscr J:Y\to Y\) be a linear involution.  Since
\(\mathscr J^2=I\), there exists a linear isomorphism
\[
  S:Y\longrightarrow\R^{d_+}\oplus\R^{d_-}
\]
such that
\begin{equation}
\label{eq:involution-diagonalization}
  S\mathscr J S^{-1}(u_+,u_-)
  =
  (u_+,-u_-).
\end{equation}
For any finite coordinate vector \(u=(u_i)\), write
\(\Phi_K(s,u):=(\phi_K(s,u_i))_i\).

\begin{proposition}[Involution cone switch]
\label{prop:involution-cone-switch}
For \(K>0\), define
\begin{equation}
\label{eq:general-involution-switch}
  \mathcal C_K^{\mathscr J}(x,u)
  :=
  S^{-1}
  \begin{bmatrix}
    u_+\\
    \Phi_K(x-\frac12,u_-)
  \end{bmatrix}
  \qquad\text{with}\quad
  (u_+,u_-)=Su.
\end{equation}
Then \(\mathcal C_K^{\mathscr J}\) is globally CPwL and satisfies
\begin{align}
  \mathcal C_K^{\mathscr J}(x,u)
  &=
  u,
  &&
  x\le\textstyle\frac12,\quad
  \|u_-\|_\infty\le K(\frac12-x),
  \label{eq:general-switch-left}
  \\
  \mathcal C_K^{\mathscr J}(x,u)
  &=
  \mathscr Ju,
  &&
  x\ge\textstyle\frac12,\quad
  \|u_-\|_\infty\le K(x-\frac12).
  \label{eq:general-switch-right}
\end{align}
It has an exact ReLU realization of fixed depth and width depending only
on \(\dim Y\), with weights bounded by \(C_S(1+K)\), where \(C_S\) depends only
on the fixed maps \(S\) and \(S^{-1}\).
\end{proposition}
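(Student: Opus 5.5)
The plan is to reduce everything to the scalar cone flip of Lemma~\ref{lem:scalar-cone-flip}, applied coordinatewise after diagonalizing the involution. I would first record why $S$ exists. Set $P_\pm=\frac12(I\pm\mathscr J)$. These are complementary projections, since $\mathscr J^2=I$ gives $P_\pm^2=P_\pm$, $P_+P_-=0$ and $P_++P_-=I$. Their ranges are the $\pm1$ eigenspaces, of dimensions $d_\pm$. Choosing bases of the two ranges yields $S$ with \eqref{eq:involution-diagonalization}. Both $S$ and $S^{-1}$ are fixed linear maps depending only on $\mathscr J$.

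Global CPwL-ness and realizability then follow from the structure of the formula. The map $(x,u)\mapsto(x-\frac12,Su)$ is affine. Each coordinate $(s,a)\mapsto\phi_K(s,a)$ is CPwL by Lemma~\ref{lem:scalar-cone-flip}. It is realized by one hidden layer with two ReLU units $\sigma(a-Ks)$ and $\sigma(-a-Ks)$, plus the linear pass-through of $-a$. The pass-through can be written $-\sigma(a)+\sigma(-a)$, or carried as an affine skip absorbed into the next layer. The $u_+$ block is passed through identically, for instance via $u_+=\sigma(u_+)-\sigma(-u_+)$. Finally $S^{-1}$ is applied as the output affine layer.

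Composing these gives a network of depth $2$ (one hidden layer) and width at most $2\dim Y+2d_-\le4\dim Y$. The first-layer weights are entries of $S$, entries of $S$ multiplied by $K$, and the bias $K/2$ coming from $x-\frac12$. The output weights are entries of $S^{-1}$. Hence all weights are bounded by $C_S(1+K)$, with $C_S$ depending only on $\|S\|$ and $\|S^{-1}\|$ entrywise.

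For the identities, set $s=x-\frac12$.
\begin{itemize}
\item If $x\le\frac12$ and $\|u_-\|_\infty\le K(\frac12-x)=K|s|$, then each coordinate satisfies $s\le0$ and $|u_{-,i}|\le K|s|$. By \eqref{eq:cone-left}, $\Phi_K(s,u_-)=u_-$, so $\mathcal C_K^{\mathscr J}(x,u)=S^{-1}(u_+,u_-)=u$.
\item If $x\ge\frac12$, then \eqref{eq:cone-right} gives $\Phi_K(s,u_-)=-u_-$. So the output is $S^{-1}(u_+,-u_-)=\mathscr J S^{-1}(u_+,u_-)=\mathscr Ju$, by \eqref{eq:involution-diagonalization}.
\end{itemize}

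At $x=\frac12$ both hypotheses force $u_-=0$, and the two formulas agree consistently. There is no real obstacle in this proof. The only points needing care are the bookkeeping of the weight bound, where the factor $K$ enters only through the terms $Ks$, and making sure the identity pass-through is written in genuine ReLU form, so that the depth and width claims hold literally.
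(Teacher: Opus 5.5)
Your proposal is correct and follows the paper's own argument: diagonalize $\mathscr J$, apply the scalar cone flip of Lemma~\ref{lem:scalar-cone-flip} coordinatewise to the odd block, and read off both cone identities and the weight bound from the fixed maps $S$ and $S^{-1}$. Your explicit one-hidden-layer network just spells out the realization the paper states in one line. One small correction that does not affect the bound $C_S(1+K)$: $K$ enters only as the weight $-K$ on $x$ and the bias $K/2$, not through products with entries of $S$.
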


\begin{proof}
On the left cone, \Cref{lem:scalar-cone-flip} gives
\(\Phi_K(x-\frac12,u_-)=u_-\), so
\(\mathcal C_K^{\mathscr J}(x,u)=u\).
On the right cone, it gives
\(\Phi_K(x-\frac12,u_-)=-u_-\), and
\eqref{eq:involution-diagonalization} yields
\(\mathcal C_K^{\mathscr J}(x,u)=\mathscr Ju\).
The realization and coefficient bounds follow from the fixed linear maps
\(S,S^{-1}\) and the coordinatewise scalar cone flips.
\end{proof}

For the universal doubled fiber, take \(Y=\widehat E\) and
\(\mathscr J=\mathcal J\).  A convenient diagonalizing map is
\[
  S
  \begin{bmatrix}u\\v\end{bmatrix}
  =
  \begin{bmatrix}
    \frac12(u+v)\\[1mm]
    \frac12(u-v)
  \end{bmatrix}.
\]
The corresponding projections are
\[
  P_+=\frac12(I+\mathcal J),
  \qquad
  P_-=\frac12(I-\mathcal J),
\]
and \eqref{eq:general-involution-switch} becomes
\begin{equation}
\label{eq:swap-cone-switch}
  \mathcal C_K(x,u)
  =
  P_+u+
  \Phi_K(x-\textstyle\frac12,P_-u),
\end{equation}
where the scalar flip is applied coordinatewise.  
Since \(\phi_K(s,\cdot)\) is odd, the coordinatewise map
\(\Phi_K(s,\cdot)\) preserves the swap-odd subspace.
We use this
swap-specialized notation in the remainder of the main construction.

\subsection{Linear control of the odd component}

For a Lipschitz map \(H:[0,1]\to\widehat E\), write \(\Lip(H)\) for its Lipschitz constant in the
max norm.

\begin{lemma}[Fold-cone estimate]
\label{lem:fold-cone-estimate}
Let \(G:[0,1]\to E\) be continuous and satisfy the seam identity
\(T_0G(1)=T_1G(0)\).  Set \(G^{\#}(x)=(G(x),G(1-x))\) and
\[
H(x)=\widehat T G^{\#}(\tau(x)).
\]
Then we have
\begin{equation}
\label{eq:odd-cone-bound}
\|P_-H(x)\|_\infty
\le
2\|P_-\widehat T\|\,\Lip(G^{\#})
\textstyle|x-\frac12| ,
\qquad x\in[0,1].
\end{equation}
\end{lemma}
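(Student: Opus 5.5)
The plan is to compare \(H(x)\) with its value at the fold, where the seam identity kills the odd component, and then use Lipschitz continuity of \(G^{\#}\) composed with the tent map.

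First, at the fold we have \(\tau(\frac12)=1\), so \(H(\frac12)=\widehat T G^{\#}(1)\). Writing this out gives
\[
\widehat T G^{\#}(1)=\begin{bmatrix}T_0G(1)\\T_1G(0)\end{bmatrix}.
\]
By the hypothesis \(T_0G(1)=T_1G(0)\), exactly as in the proof of \Cref{prop:folded-recursion}, this vector is fixed by \(\mathcal J\). Hence \(P_-H(\frac12)=0\).

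Next, for any \(x\in[0,1]\), linearity gives
\[
P_-H(x)=P_-H(x)-P_-H(\tfrac12)=P_-\widehat T\bigl(G^{\#}(\tau(x))-G^{\#}(1)\bigr).
\]
Taking max norms, we get
\[
\|P_-H(x)\|_\infty\le\|P_-\widehat T\|\,\Lip(G^{\#})\,|\tau(x)-1|.
\]
The tent map satisfies \(|\tau(x)-1|=|\tau(x)-\tau(\frac12)|\le 2|x-\frac12|\), since \(\tau\) is \(2\)-Lipschitz; explicitly, \(1-\tau(x)=2|x-\frac12|\) on \([0,1]\). Combining these gives \eqref{eq:odd-cone-bound}.

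The only delicate point is that \(\Lip(G^{\#})\) must be finite. Mere continuity of \(G\) does not guarantee this, so the statement is implicitly read with \(\Lip(G^{\#})\) possibly infinite, in which case the bound is trivial. In the application \(G=G_m\) is CPwL on \([0,1]\), hence Lipschitz. We also note that \(\Lip(G^{\#})=\Lip(G)\) in the max norm, because both blocks \(G(x)\) and \(G(1-x)\) have the same Lipschitz constant as \(G\).
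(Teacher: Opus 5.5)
Your proposal is correct and follows essentially the same route as the paper: you use the seam identity to get \(P_-H(\frac12)=0\), bound \(P_-H(x)\) by \(\|P_-\widehat T\|\) times the increment of \(G^{\#}\) between \(\tau(x)\) and \(1\), and use \(|\tau(x)-1|=2|x-\frac12|\). Your remark that \(\Lip(G^{\#})\) must be finite (automatic for the CPwL iterates \(G_m\)) is a reasonable clarification that the paper leaves implicit.
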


\begin{proof}
By the seam identity, \(H(\frac12)=\widehat T G^{\#}(1)\) is fixed by \(\mathcal J\), so
\(P_-H(\frac12)=0\).  Therefore
\[
\|P_-H(x)\|_\infty
\le
\|P_-\widehat T\|\,
\|G^{\#}(\tau(x))-G^{\#}(1)\|_\infty.
\]
Since \(|\tau(x)-1|=2|x-\frac12|\), the claimed estimate follows.
\end{proof}

\begin{corollary}[Exact folded update]
\label{cor:exact-folded-update}
We have
\begin{equation}
\label{eq:folded-update-switch}
\mathcal C_{K(G)}\bigl(x,\widehat T G^{\#}(\tau(x))\bigr)
=
\begin{cases}
\widehat T G^{\#}(\tau(x)),&x\le\frac12,\\
\mathcal J\widehat T G^{\#}(\tau(x)),&x\ge\frac12 ,
\end{cases}
\end{equation}
with
\(K(G)=1+2\|P_-\widehat T\|\,\Lip(G^{\#})\).
\end{corollary}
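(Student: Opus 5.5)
The plan is to combine the fold-cone estimate of \Cref{lem:fold-cone-estimate} with the swap-specialized cone switch \eqref{eq:swap-cone-switch}, that is, \Cref{prop:involution-cone-switch} for \(Y=\widehat E\) and \(\mathscr J=\mathcal J\). Set \(u=H(x)=\widehat T G^{\#}(\tau(x))\) and \(K=K(G)\). We must show that \(u\) lies in the appropriate cone of \Cref{prop:involution-cone-switch} for every \(x\in[0,1]\). Once this is done, \eqref{eq:general-switch-left} yields \(\mathcal C_K(x,u)=u\) for \(x\le\frac12\), and \eqref{eq:general-switch-right} yields \(\mathcal C_K(x,u)=\mathcal Ju\) for \(x\ge\frac12\). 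These are exactly the two cases of \eqref{eq:folded-update-switch}.

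One preliminary point is needed. The hypothesis of \Cref{lem:fold-cone-estimate} involves \(\Lip(G^{\#})\), so \(G\) must be Lipschitz; otherwise \(K(G)\) is infinite. In the intended application \(G=G_m\) is CPwL on \([0,1]\), hence Lipschitz. I would state this as a standing assumption, with the seam identity \eqref{eq:Gm-seam} supplying the remaining hypothesis of the lemma. Note also that \(\Lip(G^{\#})=\Lip(G)\) in the max norm, because reflection preserves Lipschitz constants.

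The cone condition comes next. There is a small notational mismatch between \eqref{eq:general-involution-switch} and \eqref{eq:swap-cone-switch}. In \eqref{eq:general-involution-switch}, \(u_-\) is the odd coordinate \(\frac12(u_1-u_2)\in E\). In \eqref{eq:swap-cone-switch}, \(P_-u=(w,-w)\) with the same \(w=\frac12(u_1-u_2)\). Hence \(\|u_-\|_\infty=\|P_-u\|_\infty\) in the max norm, and because \(\Phi_K\) is odd the two formulas define the same map. \Cref{lem:fold-cone-estimate} then gives
\[
\|P_-u\|_\infty\le 2\|P_-\widehat T\|\,\Lip(G^{\#})\,|x-\tfrac12|\le K\,|x-\tfrac12|,
\]
where the margin \(+1\) in \(K\) only makes the inequality easier and also guarantees \(K>0\), as \Cref{lem:scalar-cone-flip} requires.

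Since \(|x-\frac12|=\frac12-x\) for \(x\le\frac12\) and \(|x-\frac12|=x-\frac12\) for \(x\ge\frac12\), the cone hypotheses of \eqref{eq:general-switch-left} and \eqref{eq:general-switch-right} hold on the respective halves. At \(x=\frac12\) both apply, and consistency follows from \eqref{eq:fold-agreement}.

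No step is genuinely hard. The only thing to handle with care is the norm and coordinate bookkeeping when identifying the odd component in the two formulations of the switch, so that the constant in \Cref{lem:fold-cone-estimate} really bounds the quantity fed coordinatewise into \(\phi_K\).
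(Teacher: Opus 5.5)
Your proposal is correct and is the same argument the paper intends. The paper gives no separate proof, because the corollary is just the fold-cone estimate of \Cref{lem:fold-cone-estimate} fed into the cone conditions \eqref{eq:general-switch-left}--\eqref{eq:general-switch-right} of \Cref{prop:involution-cone-switch} for the swap, with the $+1$ in $K(G)$ only guaranteeing $K>0$. Your extra bookkeeping is accurate and harmless: identifying $\|u_-\|_\infty$ with $\|P_-u\|_\infty$ in the two forms of the switch, and making the implicit Lipschitz hypothesis on $G$ explicit.
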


\subsection{Lipschitz growth}

Equip the doubled state space with the maximum product norm, and use the
corresponding induced operator norms.  Then
\[
  \Lip(G_m^{\#})=\Lip(G_m)=:L_m,
  \qquad
  \Lip(b^{\#})=\Lip(b)=:L_b,
\]
and
\[
  T_*:=\max\{\|T_0\|,\|T_1\|\}
  =\|\widehat T\|
  =\|\mathcal J\widehat T\|.
\]
Since \(\tau\) has slope of magnitude \(2\) on each half interval,
\eqref{eq:folded-recursion} shows that each branch of \(G_{m+1}^{\#}\)
has Lipschitz constant at most \(2T_*L_m+L_b\).  The two branches meet
continuously at \(x=\frac12\) by \eqref{eq:fold-agreement}, and therefore
\begin{equation}
\label{eq:Lip-recursion}
  L_{m+1}\le 2T_*L_m+L_b .
\end{equation}

\begin{lemma}[Exponential Lipschitz bound]
\label{lem:Lip-bound}
There exist constants \(C_L>0\) and \(\Lambda\ge1\), depending only on the
fixed mask, initial profile, and forcing term, such that
\begin{equation}
\label{eq:Lip-exponential}
  \Lip(G_m^{\#})\le C_L\Lambda^m,
  \qquad m\ge0.
\end{equation}
Consequently, the cone scales
\begin{equation}
\label{eq:Km-definition}
  K_m
  =
  1+ 2\|P_-\widehat T\|\,\Lip(G_m^{\#})
\end{equation}
satisfy \(K_m\le C_K\Lambda^m\) for some \(C_K>0\).
\end{lemma}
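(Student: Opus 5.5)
The plan is to solve the linear recurrence \eqref{eq:Lip-recursion} directly. First, I will record that \(L_0=\Lip(G_0)\) is finite. Indeed, \(G_0=\vect f_0\) is a finite stack of translates of the CPwL function \(f_0\) restricted to \([0,1]\). Likewise \(L_b=\Lip(b)<\infty\), since \(B\) is CPwL and compactly supported. The identities \(\Lip(G_m^{\#})=\Lip(G_m)\) and \(\Lip(b^\#)=\Lip(b)\) hold in the max product norm, because the reflection \(x\mapsto1-x\) is an isometry.

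Next, set \(\Lambda:=\max\{1,2T_*\}\cdot 2\), or more simply \(\Lambda:=\max\{2,2T_*+1\}\); any choice with \(\Lambda\ge1\) and \(\Lambda>2T_*\) or \(\Lambda\ge 2T_*+1\) will do. Then define \(C_L:=\max\{L_0,L_b,1\}\). I claim \(L_m\le C_L\Lambda^m\) and will prove it by induction on \(m\). The case \(m=0\) is immediate. For the inductive step, \eqref{eq:Lip-recursion} gives
\[
L_{m+1}\le 2T_*C_L\Lambda^m+L_b\le (2T_*+1)C_L\Lambda^m\le C_L\Lambda^{m+1},
\]
using \(L_b\le C_L\le C_L\Lambda^m\) because \(\Lambda\ge1\). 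Alternatively, one can unroll the recursion to \(L_m\le(2T_*)^mL_0+L_b\sum_{i<m}(2T_*)^i\) and bound the geometric sum. Both constants depend only on \(T_0,T_1\) (hence on the mask and \(L\)), on \(f_0\), and on \(B\), as required.

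For the cone scales, \eqref{eq:Km-definition} gives
\[
K_m\le 1+2\|P_-\widehat T\|C_L\Lambda^m\le (1+2\|P_-\widehat T\|C_L)\Lambda^m,
\]
again using \(\Lambda^m\ge1\). So \(C_K:=1+2\|P_-\widehat T\|C_L\) works. The finiteness of \(\|P_-\widehat T\|\) is automatic, since both maps are fixed linear maps.

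There is no real obstacle here. The only point needing care is that \eqref{eq:Lip-recursion} is valid for the global Lipschitz constant on \([0,1]\), not just branchwise. This was already justified before the statement: the two branches meet continuously at \(x=\tfrac12\) by \eqref{eq:fold-agreement}, and a continuous function that is \(\ell\)-Lipschitz on \([0,\tfrac12]\) and on \([\tfrac12,1]\) is \(\ell\)-Lipschitz on \([0,1]\) by the triangle inequality through the point \(\tfrac12\). I will invoke this and otherwise keep the argument to the induction above.
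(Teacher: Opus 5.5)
Your proof is correct and follows the paper's argument: induction on the recurrence \eqref{eq:Lip-recursion} with \(\Lambda\ge 1+2T_*\) and \(C_L\ge\max\{L_0,L_b\}\), followed by the same bound for \(K_m\). The paper takes exactly \(\Lambda=1+2T_*\) and \(C_L=L_0+L_b\).

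One aside is loose. Your remark that any \(\Lambda\ge1\) with \(\Lambda>2T_*\) would do needs a larger constant, for instance \(C_L=\max\{L_0,\,L_b/(\Lambda-2T_*)\}\). The \(\Lambda\) you actually use satisfies \(\Lambda\ge 2T_*+1\), so the argument as written stands.
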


\begin{proof}
Set \(\Lambda:=1+2T_*\) and \(C_L:=L_0+L_b\).  The recursion
\eqref{eq:Lip-recursion} then gives, by induction,
\[
  L_m\le C_L\Lambda^m.
\]
Since \(L_m=\Lip(G_m^{\#})\), this proves \eqref{eq:Lip-exponential}.
The estimate for \(K_m\) follows with
\(C_K:=1+2\|P_-\widehat T\|\,C_L\).
\end{proof}

\section{Folded replay by the residual memory controller}
\label{sec:memory}

The memory controller used in this section is the binary specialization of the residual memory
construction in \cite[\S3.2]{affine-memory}.  We recall the specialization needed below in order
to keep the folded argument self-contained. 
The additional point here is its folded interpretation: the polygonal
loop is read through the reflection quotient of circle doubling, so a
single global affine readout recovers the tent-map orbit.

\subsection{Circle lift and folded readout}

Let
\(\T:=\R/\Z\),
\(d([t]):=[2t]\),
and let \(\emb:\T\to\Gamma\subset[-1,1]^2\) be the diamond embedding determined by
\begin{equation}
\label{eq:diamond-vertices}
  \emb([0])=(1,0),\qquad
  \emb([\textstyle\frac14])=(0,1),\qquad
  \emb([\textstyle\frac12])=(-1,0),\qquad
  \emb([\textstyle\frac34])=(0,-1),
\end{equation}
with affine interpolation on each quarter of the circle.  Then
\begin{equation}
\label{eq:antipodal-E}
\textstyle
  \emb([t+\frac12])=-\emb([t]),
  \qquad
  \frac12\le \|\emb([t])\|_\infty\le1.
\end{equation}
Circle doubling induces a CPwL map \(F_\Gamma:\Gamma\to\Gamma\) given by
\begin{equation}
\label{eq:F-Gamma}
  F_\Gamma(\emb([t]))=\emb(d([t]))=\emb([2t]).
\end{equation}
We fix a global CPwL extension \(F:\R^2\to\R^2\) of \(F_\Gamma\).

Define the folded readout \(\Pi:\R^2\to\R\) by
\(\Pi(x,y)=(1-x)/2\).
By the explicit parametrization of the diamond,
we have
\(\Pi(\emb([t]))=2\operatorname{dist}(t,\Z)\),
and hence
\[
  \Pi\bigl(F(\emb([t]))\bigr)
  =
  2\operatorname{dist}(2t,\Z)
  =
  \tau\bigl(\Pi(\emb([t]))\bigr).
\]
Since \(\Pi(\emb([x/2]))=x\) for \(x\in[0,1]\), iteration yields
\begin{equation}
\label{eq:tent-from-circle}
  \Pi\bigl(F^j(\emb([x/2]))\bigr)
  =
  \tau^j(x),
  \qquad
  x\in[0,1],\quad j\ge0.
\end{equation}

\subsection{Injective memory lift}

Set
\(\mem=[-1,1]^2\) and
\(X=\Gamma\times\mem\subset\R^4\).
Choose \(\alpha,\beta>0\) satisfying
\begin{equation}
\label{eq:alpha-beta}
  \alpha+\beta\le1,
  \qquad
  2\alpha<\beta,
\end{equation}
and define
\begin{equation}
\label{eq:memory-Q}
  \mathcal Q(r,y)
  :=
  \bigl(F(r),\beta r+\alpha y\bigr),
  \qquad
  (r,y)\in\R^2\times\R^2.
\end{equation}

The following result is the binary specialization of the residual memory
construction proved in \cite[\S3.2]{affine-memory}.  We include the short
argument for completeness and to fix the notation used below.

\begin{lemma}[Injective folded-loop memory]
\label{lem:memory-injective}
The map \(\mathcal Q\) sends \(X\) into itself and is injective on \(X\).
Its inverse on \(\mathcal Q(X)\) is CPwL and admits a global CPwL extension
\(\mathcal P:\R^4\to\R^4\) satisfying
\begin{equation}
\label{eq:PQ-identity}
  \mathcal P(\mathcal Q(Z))=Z,
  \qquad Z\in X.
\end{equation}
\end{lemma}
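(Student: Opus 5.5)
The proof splits into three steps: invariance, injectivity by separating the two preimages of a loop point, and a CPwL inverse assembled from fixed pieces.

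\emph{Invariance.} Let \(Z=(r,y)\in X\), so \(r\in\Gamma\) and \(y\in\mem\). The first component of \(\mathcal Q(Z)\) is \(F(r)=F_\Gamma(r)\in\Gamma\), by \eqref{eq:F-Gamma}. For the second component, \(\|r\|_\infty\le1\) by \eqref{eq:antipodal-E} and \(\|y\|_\infty\le1\). Hence
\[
\|\beta r+\alpha y\|_\infty\le\beta+\alpha\le1
\]
by \eqref{eq:alpha-beta}, so \(\mathcal Q(X)\subset X\).

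\emph{Injectivity.} Suppose \(\mathcal Q(r,y)=\mathcal Q(r',y')\) with both points in \(X\). Then \(F_\Gamma(r)=F_\Gamma(r')\), and since circle doubling is two-to-one, either \(r'=r\) or \(r'=-r\) by the antipodal relation \eqref{eq:antipodal-E}.

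In the case \(r'=r\), the equality \(\beta r+\alpha y=\beta r+\alpha y'\) gives \(y=y'\).

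In the case \(r'=-r\), the second components give \(2\beta r=\alpha(y'-y)\). The left side has max norm at least \(2\beta\cdot\frac12=\beta\), while the right side has max norm at most \(2\alpha\). Since \(2\alpha<\beta\), this is a contradiction.

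\emph{Inverse and CPwL extension.} Write \(W=(w,z)=\mathcal Q(r,y)\). The key fact is that \(z/\beta=r+(\alpha/\beta)y\) lies within distance \(\alpha/\beta<\frac12\) of \(r\), whereas the other candidate preimage \(-r\) lies at distance \(2\|r\|_\infty\ge1\) from \(r\). Thus the second coordinate \(z\) determines which of the two preimages of \(w\) is \(r\).

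To make this CPwL, I would split \(\Gamma\) into finitely many arcs \(\Gamma_i\) on which \(F_\Gamma\) is affine and injective; the dyadic quarter arcs, or their refinements into eighths, suffice. Each arc has an affine inverse branch \(g_i\) on \(F(\Gamma_i)\). On the piece \(\mathcal Q(\Gamma_i\times\mem)\) the inverse is
\[
(w,z)\mapsto\Bigl(g_i(w),\ \tfrac{z-\beta g_i(w)}{\alpha}\Bigr),
\]
which is affine. These pieces are compact polyhedral sets, being affine images of polytopes under injective affine maps. They cover \(\mathcal Q(X)\), and adjacent pieces meet only along images of arc endpoints, where the formulas agree by continuity.

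It remains to show that the inverse \(\mathcal Q^{-1}\) on \(\mathcal Q(X)\) is continuous and piecewise affine on a finite polyhedral complex, and then extend it. Continuity holds because it is the inverse of a continuous injection on a compact set. Piecewise affinity holds because the pieces are finitely many polytopes carrying the affine formulas above. A CPwL function on a finite union of polytopes, continuous there, extends to a global CPwL map \(\R^4\to\R^4\). One route is to triangulate the union, extend the triangulation to a triangulation of a large box, and assign arbitrary vertex values outside the union. Another is the standard extension result cited via \cite{relu-fem}. This gives \(\mathcal P\) with \eqref{eq:PQ-identity}.

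\emph{Main obstacle.} The delicate step is this extension: checking that \(\mathcal Q(X)\) is a finite union of polytopes on which \(\mathcal Q^{-1}\) is affine piecewise. Injectivity guarantees that the pieces overlap only where the formulas are consistent. If needed, I would instead cite the corresponding statement in \cite[\S3.2]{affine-memory}.
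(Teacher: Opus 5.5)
Your proposal is correct and follows the paper's proof essentially step for step: the same bound $\|\beta r+\alpha y\|_\infty\le\beta+\alpha\le1$ for invariance, the same antipodal contradiction $2\beta r=\alpha(y'-y)$ for injectivity, and the same inverse that is affine on product cells followed by a global CPwL extension, which you make more explicit through the inverse branches $g_i$. One small correction: the quarter arcs do not suffice, because $F_\Gamma$ has a kink at the eighth points (for instance at $t=\frac18$, where $[2t]=[\tfrac14]$ is a vertex of $\Gamma$), so you need the refinement into eighths.
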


\begin{proof}
For \((r,y)\in X\), we have \(F(r)\in\Gamma\) and
\[
  \|\beta r+\alpha y\|_\infty
  \le
  \beta\|r\|_\infty+\alpha\|y\|_\infty
  \le
  \beta+\alpha
  \le1.
\]
Thus \(\mathcal Q(X)\subset X\).

Suppose that
\[
  \mathcal Q(\emb([t]),y)
  =
  \mathcal Q(\emb([u]),\tilde y).
\]
Since \(F(\emb([v]))=\emb([2v])\) on \(\Gamma\), equality of the first
components gives
\(\emb([2t])=\emb([2u])\).
The injectivity of \(\emb:\T\to\Gamma\) implies
\([2t]=[2u]\), and hence either
\([u]=[t]\) or
\([u]=[t+\frac12]\).
In the first case, equality of the second components gives
\(y=\tilde y\).  In the second case,
\eqref{eq:antipodal-E} gives
\[
  2\beta \emb([t])
  =
  \alpha(\tilde y-y).
\]
The left-hand side has max norm at least \(\beta\), whereas the
right-hand side has max norm at most \(2\alpha\), contradicting
\eqref{eq:alpha-beta}.  Thus the second case is impossible, and
\(\mathcal Q|_X\) is injective.

Finally, \(F|_\Gamma\) is CPwL, so after a finite subdivision
\(\mathcal Q\) is affine on every product cell of \(X\).  Its
injectivity implies that the image cells form a finite polyhedral
complex and that the inverse affine formulas agree on their
intersections.  They therefore define a CPwL inverse on
\(\mathcal Q(X)\), which admits a global CPwL extension
\(\mathcal P:\R^4\to\R^4\), as in
\cite[\S3.2]{affine-memory}.
\end{proof}

\subsection{Exact backward folded replay}

We extend the folded readout to the full controller space, using the same
symbol, by
\[
  \Pi(r,y):=\Pi(r),
  \qquad (r,y)\in\R^2\times\R^2.
\]
Thus \(\Pi:\R^4\to\R\) is a fixed affine map that ignores the memory
coordinate.

For \(x\in[0,1]\), define
\begin{equation}
\label{eq:Z0}
  Z_0(x)
  :=
  \bigl(\emb([x/2]),0\bigr),
  \qquad
  Z_j(x)
  :=
  \mathcal Q^j(Z_0(x)).
\end{equation}
The loop component of \(Z_j(x)\) is
\(F^j(\emb([x/2])) = \emb(d^j([x/2]))\).

\begin{lemma}[Backward folded replay]
\label{lem:backward-replay}
For \(0\le j\le n\), one has
\begin{equation}
\label{eq:backward-replay}
  \mathcal P^{\,n-j}(Z_n(x))
  =
  Z_j(x).
\end{equation}
Moreover, we have
\begin{equation}
\label{eq:folded-readout}
  \Pi(Z_j(x))
  =
  \tau^j(x).
\end{equation}
\end{lemma}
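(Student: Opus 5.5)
The plan is to prove the two claims separately: the replay identity \eqref{eq:backward-replay} by induction on \(n-j\), and the readout \eqref{eq:folded-readout} by reducing it to \eqref{eq:tent-from-circle}.

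\textbf{Replay identity.} First check that every orbit point lies in \(X\). We have \(Z_0(x)=(\emb([x/2]),0)\in\Gamma\times\mem=X\), since \(0\in\mem\). By \Cref{lem:memory-injective}, \(\mathcal Q(X)\subset X\), so induction gives \(Z_j(x)\in X\) for all \(j\ge0\).

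Now induct on \(k=n-j\). For \(k=0\) there is nothing to prove. For the step, suppose \(\mathcal P^{k}(Z_n(x))=Z_{n-k}(x)\) with \(n-k\ge1\). Then
\[
\mathcal P^{k+1}(Z_n(x))=\mathcal P(Z_{n-k}(x))=\mathcal P\bigl(\mathcal Q(Z_{n-k-1}(x))\bigr)=Z_{n-k-1}(x).
\]
The last equality is \eqref{eq:PQ-identity}, which applies because \(Z_{n-k-1}(x)\in X\). This is the only place where care is needed: \(\mathcal P\) is a global extension that inverts \(\mathcal Q\) only on \(X\), so the invariance \(Z_j(x)\in X\) must be established before the induction.

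\textbf{Readout identity.} By construction \(\Pi(r,y)=\Pi(r)\), so \(\Pi(Z_j(x))\) depends only on the loop component of \(Z_j(x)\). Since \(\mathcal Q\) acts on the loop coordinate by \(F\), independently of the memory coordinate, a trivial induction shows that this loop component is \(F^j(\emb([x/2]))\), as recorded after \eqref{eq:Z0}. Equation \eqref{eq:tent-from-circle} then gives \(\Pi(Z_j(x))=\tau^j(x)\) directly.

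If one also wants to justify \eqref{eq:tent-from-circle} itself, induct on \(j\) using the one-step relation \(\Pi(F(\emb([t])))=\tau(\Pi(\emb([t])))\). This relation applies at every stage because \(F\) maps \(\Gamma\) to \(\Gamma\), so \(F^j(\emb([x/2]))=\emb([2^jx/2])\) stays on the loop, where \(F=F_\Gamma\).
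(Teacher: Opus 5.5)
Your proof is correct and follows essentially the same route as the paper, which iterates \(\mathcal P(\mathcal Q(Z))=Z\) along the orbit and then reads off \(\tau^j(x)\) from \eqref{eq:tent-from-circle}. Your explicit check that \(Z_j(x)\in X\) for all \(j\), from \(Z_0(x)\in X\) and \(\mathcal Q(X)\subset X\), spells out the hypothesis of \eqref{eq:PQ-identity} that the paper uses without stating it.
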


\begin{proof}
Since \(Z_{k+1}(x)=\mathcal Q(Z_k(x))\), identity
\eqref{eq:PQ-identity} gives
\(\mathcal P(Z_{k+1}(x))=Z_k(x)\).  Iteration proves
\eqref{eq:backward-replay}, while \eqref{eq:folded-readout} follows from
\eqref{eq:tent-from-circle}.
\end{proof}

The memory coordinate retains the information lost under circle doubling,
whereas the folded residual is read directly from the loop component.  On the
circle, the two inverse branches are represented by uniformly separated
antipodal points, which makes the injective skew-product possible.

\section{Unit-interval realization of the affine cascade}
\label{sec:unit-realization}

We now combine reflection doubling, cone switching, and backward folded replay.  Throughout this
section, \(G_m=\vect(W^m\gamma)\), \(G_m^{\#}\) is its reflected double, and \(b^{\#}\) is the doubled
forcing.

\subsection{The nested folded recursion}

By \Cref{prop:folded-recursion,cor:exact-folded-update}, the exact functional recursion is
\begin{equation}
\label{eq:functional-folded-recursion}
G_{m+1}^{\#}(x)
=
\mathcal C_{K_m}
\bigl(x,\widehat T G_m^{\#}(\tau(x))\bigr)
+b^{\#}(x),
\end{equation}
where \(K_m\) is chosen as in \eqref{eq:Km-definition}.

For a fixed input \(x\), set
\[
x_j=\tau^j(x),
\qquad j=0,\ldots,n.
\]
The nested evaluation of \eqref{eq:functional-folded-recursion} starts from
\(G_0^{\#}(x_n)\).  It then applies the affine stages along the residuals in the order
\(x_{n-1},x_{n-2},\ldots,x_0\).
Define
\begin{equation}
\label{eq:backward-U-init}
U_n(x)=G_0^{\#}(x_n),
\end{equation}
and, for \(j=n-1,n-2,\ldots,0\), define
\begin{equation}
\label{eq:backward-U}
U_j(x)
=
\mathcal C_{K_{n-1-j}}
\bigl(x_j,\widehat T U_{j+1}(x)\bigr)
+b^{\#}(x_j).
\end{equation}

\begin{lemma}[Backward folded evaluation]
\label{lem:backward-folded-evaluation}
For \(j=0,\ldots,n\), we have
\begin{equation}
\label{eq:U-correctness}
U_j(x)=G_{n-j}^{\#}(x_j).
\end{equation}
In particular, it holds that
\begin{equation}
\label{eq:U0-output}
U_0(x)=G_n^{\#}(x).
\end{equation}
\end{lemma}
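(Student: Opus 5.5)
The plan is a downward induction on \(j\), from \(j=n\) to \(j=0\). The base case \(j=n\) is the definition \eqref{eq:backward-U-init}: \(U_n(x)=G_0^{\#}(x_n)\), which is \eqref{eq:U-correctness} with \(n-j=0\).

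For the inductive step, fix \(j\in\{0,\dots,n-1\}\) and assume \(U_{j+1}(x)=G_{n-j-1}^{\#}(x_{j+1})\). Put \(m:=n-1-j\). Since \(x_{j+1}=\tau(x_j)\), the definition \eqref{eq:backward-U} becomes
\[
U_j(x)=\mathcal C_{K_m}\bigl(x_j,\widehat T G_m^{\#}(\tau(x_j))\bigr)+b^{\#}(x_j).
\]
We then evaluate the functional folded recursion \eqref{eq:functional-folded-recursion} at the point \(x_j\in[0,1]\). This gives \(U_j(x)=G_{m+1}^{\#}(x_j)=G_{n-j}^{\#}(x_j)\), which closes the induction. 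Taking \(j=0\) and using \(x_0=x\) yields \eqref{eq:U0-output}.

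The only point needing care is that \eqref{eq:functional-folded-recursion} holds pointwise at every \(x_j\), with the specific scale \(K_m\) of \eqref{eq:Km-definition}. I would justify this explicitly from \Cref{cor:exact-folded-update} applied to \(G=G_m\).

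The corollary requires \(G_m\) to be continuous and to satisfy the seam identity. Continuity holds because \(G_m\) is CPwL. The seam identity is \eqref{eq:Gm-seam}.

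It also requires a cone scale \(K\ge K(G_m)\), where \(K(G_m)=1+2\|P_-\widehat T\|\Lip(G_m^{\#})\). This is exactly \(K_m\), so the cone condition of \Cref{prop:involution-cone-switch} holds by \Cref{lem:fold-cone-estimate}. The extra \(1\) in \(K_m\) only enlarges the cone and does no harm.

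With these hypotheses, the cone switch reproduces the two branches of \Cref{prop:folded-recursion}. At the fold \(x_j=\frac12\) both branches apply and coincide by \eqref{eq:fold-agreement}, so the value is unambiguous. This verification of the corollary's hypotheses at the correct stage index \(m=n-1-j\) is the only potential obstacle; the remaining argument is bookkeeping of indices.
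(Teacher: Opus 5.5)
Your proposal is correct and follows the paper's proof: downward induction from the definition of \(U_n\), with the step obtained by substituting \(U_{j+1}=G_m^{\#}(\tau(x_j))\), \(m=n-1-j\), into the functional folded recursion \eqref{eq:functional-folded-recursion} evaluated at \(x_j\). Your extra check that \Cref{cor:exact-folded-update} applies to \(G_m\) with scale \(K_m\) (via \eqref{eq:Gm-seam} and \Cref{lem:fold-cone-estimate}) spells out what the paper builds into \eqref{eq:functional-folded-recursion}.
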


\begin{proof}
The statement is true for \(j=n\) by definition.  Suppose it holds for \(j+1\).  Set
\(m=n-1-j\).  Then \(U_{j+1}=G_m^{\#}(x_{j+1})\), and
\(x_{j+1}=\tau(x_j)\).  The exact functional recursion
\eqref{eq:functional-folded-recursion} gives
\[
U_j
=
G_{m+1}^{\#}(x_j)
=
G_{n-j}^{\#}(x_j).
\]
Backward induction proves the claim.
\end{proof}

\subsection{Network architecture}

For a fixed depth \(n\), the unit-interval network is assembled as follows.

\begin{enumerate}[label=\textup{(\arabic*)},leftmargin=2.8em]
  \item From the input \(x\), compute the valid initial memory state
  \(Z_0(x)=(\emb([x/2]),0)\).

  \item Apply \(n\) copies of the fixed CPwL memory update \(\mathcal Q\) to obtain \(Z_n(x)\).

  \item Read \(x_n=\Pi(Z_n(x))\), and initialize
  \(U_n=G_0^{\#}(x_n)\).  This is a fixed CPwL readout because \(G_0\) is fixed and CPwL.

  \item For each \(m=0,\ldots,n-1\), apply one backward block.  The block first applies
  \(\mathcal P\) to recover the preceding memory state, reads the corresponding folded residual
  \(x_{n-1-m}\), and performs the update \eqref{eq:backward-U} using the cone scale \(K_m\).

  \item After the last backward block, project \(U_0\) onto its first \(E\)-component.  This is
  \(G_n(x)=\vect(W^n\gamma)(x)\).
\end{enumerate}

All modules except the stage-indexed cone scales are fixed CPwL maps.  The scale \(K_m\) is a
fixed numerical parameter of the \(m\)-th block, not a network variable.

\begin{theorem}[Unit-interval affine realization]
\label{thm:unit-affine}
Let \(W\) be as in \eqref{eq:intro-W}, with compactly supported CPwL input and forcing in a
preserved support window \([0,L]\).  There exist integers \(C_0,C_1>0\), independent of \(n\),
such that
\begin{equation}
\label{eq:unit-affine-class}
\vect(W^n\gamma)
\in
\Ups_{C_0,C_1n}(\ReLU;1,pL)
\end{equation}
on \([0,1]\).  The weights and biases may be bounded by \(C_2\Lambda^n\) for constants depending
only on the fixed mask, support window, and CPwL data.
\end{theorem}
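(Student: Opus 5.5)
The plan is to verify that the architecture (1)--(5) computes the right function, and then to count width, depth and weight sizes. Correctness reduces to results already proved. Step (2) produces \(Z_n(x)=\mathcal Q^n(Z_0(x))\). By \Cref{lem:backward-replay}, each backward block applies \(\mathcal P\) once, so the \(m\)-th block holds \(Z_{n-1-m}(x)\) and reads \(x_{n-1-m}=\Pi(Z_{n-1-m}(x))=\tau^{n-1-m}(x)\). The update performed in that block is exactly \eqref{eq:backward-U}, so \Cref{lem:backward-folded-evaluation} gives \(U_0(x)=G_n^{\#}(x)\). Projecting onto the first \(E\)-component is linear and yields \(\vect(W^n\gamma)(x)\). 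The one point to check is that the cone scale \(K_m\) in the block is the one required by \Cref{cor:exact-folded-update} for \(G=G_m\). This holds by definition \eqref{eq:Km-definition}, since \(G_m\) satisfies the seam identity \eqref{eq:Gm-seam}.

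For the architecture, I will realize each block as a fixed-width ReLU subnetwork acting on the state \((Z,U)\in\R^4\times\widehat E\). The subnetwork computes:
\begin{itemize}
\item \(Z\mapsto\mathcal P(Z)\), a fixed CPwL map with a fixed-size ReLU realization;
\item \(x_j=\Pi(Z)\), which is affine;
\item \(\widehat T U\), which is linear;
\item the cone switch \(\mathcal C_{K_m}(x_j,\widehat T U)\), realized by \Cref{prop:involution-cone-switch} with fixed depth and width and weights \(\le C_S(1+K_m)\);
\item \(b^{\#}(x_j)\), a fixed scalar-input CPwL map with a hinge representation.
\end{itemize}
Since \(Z\) and \(U\) must be carried through layers where they are not being transformed, I use the identity \(u=\sigma(u)-\sigma(-u)\), which costs a constant factor in width. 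Steps (1) and (3) are fixed CPwL maps: \(x\mapsto\emb([x/2])\) on \([0,1]\), and \(G_0^{\#}\circ\Pi\). Steps (2) and (4) each consist of \(n\) copies of fixed-size blocks. The total depth is therefore \(C_1n\) and the width is a constant \(C_0\) depending on \(p,L\) and the fixed modules. Any constant-depth pieces are absorbed by taking \(n\ge1\) and enlarging \(C_1\).

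For weights, every module is fixed except for the scale \(K_m\). The scalar flip \(\phi_K\) enters only through the preactivations \(a\mp Ks\), so \(K_m\) appears as a weight \(\le C_K\Lambda^m\le C_K\Lambda^n\) by \Cref{lem:Lip-bound}. All other weights are bounded by constants. One subtlety concerns the ReLU realization of the global extensions \(F\) and \(\mathcal P\). We only need them to be exact on the orbit, and this holds because \(Z_j(x)\in X\) by \Cref{lem:memory-injective}. The weights of these fixed networks are fixed constants.

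I expect the main obstacle to be bookkeeping rather than anything conceptual. One issue is that the network is required to be defined on all of \(\R\) but exact only on \([0,1]\); the restriction convention handles this. The other is composing modules of different widths while keeping a uniform width bound, which is done by padding with identity channels. I would take the width to be the maximum over the fixed modules plus the carried channels \(2(4+2D)\).
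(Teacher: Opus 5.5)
Your proposal is correct and takes essentially the same route as the paper's proof. Exactness comes from the backward replay and backward folded evaluation lemmas, and fixed width with depth \(O(n)\) comes from repeating one fixed memory module and one fixed backward block \(n\) times. The weight bound comes from \(K_m\le C_K\Lambda^m\) together with the \(C_S(1+K)\) coefficient bound of the cone switch. Your extra bookkeeping, namely the identity channels via \(\sigma(u)-\sigma(-u)\) and the observation that \(F\) and \(\mathcal P\) need only be exact on \(X\), makes explicit what the paper leaves implicit.
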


\begin{proof}
The forward memory pass consists of \(n\) copies of one fixed CPwL module, and hence has fixed
width and depth \(O(n)\).  The terminal readout has fixed complexity.  Each backward block consists
of the fixed inverse memory module, the fixed folded readout, one fixed linear map \(\widehat T\),
the cone switch \(\mathcal C_{K_m}\), and the fixed forcing readout \(b^{\#}\).  It therefore has fixed
depth and width depending only on \(pL\).  There are \(n\) backward blocks.

Exactness follows from \Cref{lem:backward-replay,lem:backward-folded-evaluation}.  By
\Cref{lem:Lip-bound}, \(K_m\le C\Lambda^m\), and the cone-switch weights are bounded by
\(C'(1+K_m)\).  The remaining modules have fixed coefficients.  Enlarging the constants yields the
coarse bound \(C_2\Lambda^n\).
\end{proof}

\subsection{The homogeneous case}

Setting \(B\equiv0\) yields a folded-memory proof of the binary homogeneous
vector-valued theorem.  Memory is not intrinsically necessary in this case:
besides the earlier forward constructions, the cone switch also admits an
adjoint homogeneous realization.  
For endpoint-zero reference atoms, the swap-odd adjoint candidate satisfies
the same cone estimate, so the adjoint cascade can be propagated forward
without residual memory; see Appendix~\ref{app:adjoint-homogeneous}.

As in the earlier affine constructions, we use a primal cascade with
residual memory for the affine problem and retain the same formulation in
the homogeneous case.  This gives a unified argument, displays the
reflection-folding mechanism directly, and avoids an atomic decomposition:
the construction propagates the complete vectorized initial profile and,
in the affine case, the complete forcing profile.

\section{Global realization and stage-dependent forcing}
\label{sec:global}

\subsection{Globalization by clamped gluing}

We use the standard clamped-gluing device employed in
\cite{source,loop} to pass from the vectorized realization on one unit
interval to the global compactly supported function.  We recall the
formula and its short verification for completeness.

Define
\begin{equation}
\label{eq:clip}
  \clip(s):=\sigma(s)-\sigma(s-1).
\end{equation}
Thus \(\clip(s)=0\) for \(s\le0\), \(\clip(s)=s\) for
\(0\le s\le1\), and \(\clip(s)=1\) for \(s\ge1\).

\begin{lemma}[Clamped gluing]
\label{lem:clamped-gluing}
Let \(g_0,\ldots,g_{L-1}:[0,1]\to\R^p\) be continuous and satisfy
\begin{equation}
\label{eq:gluing-compatibility}
  g_{k-1}(1)=g_k(0),
  \qquad k=1,\ldots,L-1,
\end{equation}
together with \(g_0(0)=g_{L-1}(1)=0\).  Then
\begin{equation}
\label{eq:gluing-formula}
  \mathcal G(t)
  :=
  g_0(\clip(t))
  +
  \sum_{k=1}^{L-1}
  \bigl[g_k(\clip(t-k))-g_k(0)\bigr]
\end{equation}
satisfies
\[
  \mathcal G(t)=g_m(t-m),
  \qquad t\in[m,m+1],
\]
for \(m=0,\ldots,L-1\), and vanishes outside \([0,L]\).
\end{lemma}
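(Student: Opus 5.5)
The proof is a direct case check of \eqref{eq:gluing-formula}, using only the three regimes of \(\clip\) recorded after \eqref{eq:clip}. Fix \(m\in\{0,\ldots,L-1\}\) and \(t\in[m,m+1]\). For each summand index \(k\) we locate \(t-k\) relative to \([0,1]\):
\begin{itemize}
  \item if \(k<m\), then \(t-k\ge1\), so \(\clip(t-k)=1\);
  \item if \(k=m\), then \(\clip(t-m)=t-m\);
  \item if \(k>m\), then \(t-k\le0\), so \(\clip(t-k)=0\).
\end{itemize}
The boundary cases \(k=m\pm1\) at \(t=m\) or \(t=m+1\) fall into two regimes at once, but the values agree there, so nothing is lost.

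Substituting, the terms with \(k>m\) give \(g_k(0)-g_k(0)=0\) and drop out. The remaining expression is
\[
  \mathcal G(t)=g_0(1)+\sum_{k=1}^{m-1}\bigl[g_k(1)-g_k(0)\bigr]+\bigl[g_m(t-m)-g_m(0)\bigr]
\]
when \(m\ge1\). When \(m=0\) it is simply \(g_0(t)\).

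For \(m\ge1\) the sum telescopes by \eqref{eq:gluing-compatibility}. Written as
\[
  g_0(1)-g_1(0)+g_1(1)-g_2(0)+\cdots+g_{m-1}(1)-g_m(0),
\]
each pair \(g_{k-1}(1)-g_k(0)\) vanishes for \(k=1,\ldots,m\). This leaves \(\mathcal G(t)=g_m(t-m)\).

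Outside \([0,L]\) there are two cases:
\begin{itemize}
  \item For \(t\le0\), every \(\clip\) argument is nonpositive. The first term is \(g_0(0)=0\) and every bracket vanishes.
  \item For \(t\ge L\), every \(\clip\) equals \(1\). The same telescoping, now run through \(k=L-1\), gives \(g_{L-1}(1)=0\).
\end{itemize}

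There is no genuine obstacle here. The only care needed is the index bookkeeping at the endpoints \(t=m\) and \(t=m+1\), where two clip regimes meet; this is harmless because \(\clip\) is continuous and the regimes agree there.
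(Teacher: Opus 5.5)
Your proposal is correct and follows the paper's proof: the same three clip regimes for \(k<m\), \(k=m\), \(k>m\), the same telescoping via the compatibility conditions \(g_{k-1}(1)=g_k(0)\), and the same use of \(g_0(0)=g_{L-1}(1)=0\) outside \([0,L]\). You simply write out the telescoping sum and the two exterior cases in more detail than the paper does.
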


\begin{proof}
For \(t\in[m,m+1]\), the clamped arguments equal \(1\) for
\(k<m\), \(t-m\) for \(k=m\), and \(0\) for \(k>m\).
The resulting sum telescopes by \eqref{eq:gluing-compatibility} to
\(g_m(t-m)\).  The same calculation, using the endpoint conditions,
gives \(\mathcal G(t)=0\) for \(t\notin[0,L]\).
\end{proof}

If a fixed-width, depth-\(O(n)\) network returns the complete
vectorization on \([0,1]\), then the finitely many shifted and clamped
copies required in \eqref{eq:gluing-formula} can be evaluated in
parallel.  Since \(L\) is fixed, globalization changes only the constant
width factor and preserves depth \(O(n)\).

\subsection{Main binary affine theorem}

Combining the unit-interval construction with clamped gluing yields the global realization theorem.

\begin{theorem}[Exact binary affine realization]
\label{thm:main-binary-affine}
Let
\begin{equation}
\label{eq:main-W}
  (W\gamma)(t)
  =
  \sum_{j\in\Z}A_j\gamma(2t-j)+B(t),
\end{equation}
where only finitely many matrices \(A_j\in\R^{p\times p}\) are nonzero.
Assume that the homogeneous part preserves a support window \([0,L]\),
and let \(\gamma,B:\R\to\R^p\) be CPwL and supported in that window.
Then there exist constants \(C_0,C_1>0\), independent of \(n\), such that
\begin{equation}
\label{eq:main-theorem-class}
  W^n\gamma
  \in
  \Ups_{C_0,C_1n}(\ReLU;1,p),
  \qquad n\ge1.
\end{equation}
Moreover, the realizing networks may be chosen with weights and biases
bounded by \(C_2\Lambda^n\), where \(C_2,\Lambda>0\) depend only on the
fixed refinement and CPwL data.
\end{theorem}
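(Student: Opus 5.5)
The proof combines \Cref{thm:unit-affine} with \Cref{lem:clamped-gluing}. Fix \(n\ge1\) and set \(f_n=W^n\gamma\), \(G_n=\vect f_n\). Write its blocks as \(g_k(x):=(G_n)_k(x)=f_n(x+k)\) for \(k=0,\ldots,L-1\) and \(x\in[0,1]\).

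\textbf{Step 1: supports and gluing hypotheses.} By the support-window hypothesis \eqref{eq:support-window}, \(\supp Vf\subset[0,L]\) whenever \(\supp f\subset[0,L]\). Since \(\supp B\subset[0,L]\), an induction on \(n\) gives \(\supp f_n\subset[0,L]\). The function \(f_n\) is also continuous and CPwL on \(\R\), so \(f_n(0)=f_n(L)=0\). From this:
\begin{itemize}
\item continuity gives \(g_{k-1}(1)=f_n(k)=g_k(0)\) for \(k=1,\ldots,L-1\);
\item the boundary values give \(g_0(0)=g_{L-1}(1)=0\).
\end{itemize}
These are exactly the hypotheses of \Cref{lem:clamped-gluing}. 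That lemma, together with the vanishing of \(f_n\) outside \([0,L]\), yields \(\mathcal G(t)=f_n(t)\) for all \(t\in\R\), where \(\mathcal G\) is built from the \(g_k\) by \eqref{eq:gluing-formula}.

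\textbf{Step 2: network assembly.} By \Cref{thm:unit-affine}, there is a network \(\mathcal N_n\) of width \(C_0'\), depth \(C_1'n\), and weights bounded by \(C_2'\Lambda^n\). Its output is defined on all of \(\R\) and restricts on \([0,1]\) to \(G_n\). Only its values on \([0,1]\) will ever be used, because every argument fed to it is clamped.

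\begin{itemize}
\item For each \(k=0,\ldots,L-1\), compute \(s_k=\clip(t-k)\). This is one ReLU layer of width \(2\) per \(k\), with bounded coefficients.
\item Run \(L\) parallel copies of \(\mathcal N_n\) on the inputs \(s_k\), so that copy \(k\) evaluates \(G_n(s_k)\).
\item From copy \(k\), keep only block \(k\) by a fixed linear projection, giving \(g_k(s_k)\).
\item Subtract the constants \(g_k(0)\) for \(k\ge1\) as biases, and sum.
\end{itemize}

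The resulting output is \(\mathcal G(t)=f_n(t)\). Before the parallel copies, the clamped values \(s_k\) must be carried through depth. This is done with identity channels via \(s=\sigma(s)\), valid since \(s_k\ge0\). Parallelization multiplies width by \(L\) and adds \(O(1)\) width for the carried channels. The clamp and the final linear combination add \(O(1)\) depth. Hence the width is at most \(C_0:=L C_0'+O(1)\) and the depth is at most \(C_1 n\) after enlarging \(C_1\) using \(n\ge1\).

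\textbf{Step 3: weight bounds.} The subtracted biases \(g_k(0)=f_n(k)\) must also be bounded. They satisfy \(|f_n(k)|\le\|G_n\|_\infty\), and \eqref{eq:affine-vector-recursion} gives \(\|G_{m+1}\|_\infty\le T_*\|G_m\|_\infty+\|b\|_\infty\). This yields exponential growth of \(\|G_n\|_\infty\), which can be absorbed into \(C_2\Lambda^n\) after enlarging \(\Lambda\) to be at least \(1+T_*\). All other new coefficients are fixed.

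The main obstacle is bookkeeping rather than mathematics. One point is that the unit network is only guaranteed correct on \([0,1]\); this is exactly why every argument is clamped first. The other is equalizing the depths of the parallel branches, which is handled by identity padding.
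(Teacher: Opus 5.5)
Your proposal is correct and follows essentially the same route as the paper. You realize the full vectorization on $[0,1]$ via \Cref{thm:unit-affine}, check the gluing hypotheses from the continuity and support of $W^n\gamma$, and assemble the global function by \Cref{lem:clamped-gluing} using parallel clamped copies. Your explicit bound on the glued biases $g_k(0)=f_n(k)$, via $\|G_{m+1}\|_\infty\le T_*\|G_m\|_\infty+\|b\|_\infty$, fills in a point the paper leaves implicit in its ``fixed factors'' remark. The identity padding you describe is harmless but unnecessary, since all parallel branches already have equal depth.
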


\begin{proof}
By \Cref{thm:unit-affine}, the complete vectorization of \(W^n\gamma\)
on \([0,1]\) has an exact fixed-width, depth-\(O(n)\) realization.
Its coordinate functions satisfy
\eqref{eq:gluing-compatibility} and the outer endpoint conditions,
because they are the unit-cell restrictions of the continuous function
\(W^n\gamma\), supported in \([0,L]\).  Applying
\Cref{lem:clamped-gluing} gives the global realization.  The parallel
evaluation of the finitely many shifted copies changes the width and
coefficient bounds only by fixed factors.
\end{proof}

No symmetry or seam-separation hypothesis is imposed on the forcing
term.  The construction keeps its full vectorization
\(b(x)=\vect B(x)\) and inserts the reflected pair
\(b^\#(x)=(b(x),b(1-x))^{\trans}\) directly into the folded affine
recursion.  Thus both residual orientations are carried simultaneously,
without decomposing \(B\) into translated reference atoms or invoking
translation covariance.  The compatibility required by the cone switch
is instead the cell-seam identity of the complete vectorized profile,
which is preserved by the affine recursion.

\subsection{Stage-dependent forcing}

Consider the nonstationary affine recursion
\begin{equation}
\label{eq:stage-dependent-recursion}
  \gamma_{r+1}=V\gamma_r+B_r,
  \qquad r=0,\ldots,n-1,
\end{equation}
with a fixed homogeneous binary refinement operator \(V\).

\begin{corollary}[Fixed-span stage-dependent forcing]
\label{cor:stage-dependent}
Let \(\gamma_0:\R\to\R^p\) and
\(B^{(0)},\ldots,B^{(N)}:\R\to\R^p\) be fixed CPwL functions supported
in the preserved support window.  Suppose that
\begin{equation}
\label{eq:fixed-span}
  B_r
  =
  \sum_{\alpha=0}^{N}
  \lambda_{r,\alpha}B^{(\alpha)},
  \qquad r=0,\ldots,n-1.
\end{equation}
Then the output \(\gamma_n\) of
\eqref{eq:stage-dependent-recursion} admits an exact fixed-width,
depth-\(O(n)\) ReLU realization.

More precisely, if
\[
  M_n
  :=
  \max_{\substack{0\le r<n\\0\le\alpha\le N}}
  |\lambda_{r,\alpha}|,
\]
then the realizing network may be chosen with weights and biases bounded by
\(C(1+M_n)\Lambda^n\),
where \(C,\Lambda>0\) depend only on the mask, the support window,
\(\gamma_0\), and the forcing templates.
\end{corollary}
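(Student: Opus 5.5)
The plan is to rerun the unit-interval construction of \Cref{thm:unit-affine} with a stage-indexed forcing readout, and then globalize exactly as in \Cref{thm:main-binary-affine}.

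\textbf{Step 1: the folded recursion with varying forcing.} Set \(G_r=\vect\gamma_r\) and \(b^{(\alpha)}=\vect B^{(\alpha)}\). By linearity of \(\vect\),
\[
  b_r:=\vect B_r=\sum_{\alpha}\lambda_{r,\alpha}b^{(\alpha)} .
\]
\Cref{lem:binary-cascade} gives \(G_{r+1}(x)=T_{Q}G_r(R)+b_r(x)\). \Cref{prop:folded-recursion} then holds with \(b^{\#}\) replaced by \(b_r^{\#}\), since its proof never uses stationarity of the forcing. The seam identity \eqref{eq:Gm-seam} holds for every \(r\), because each \(\gamma_r\) is continuous and supported in \([0,L]\). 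Hence \Cref{lem:fold-cone-estimate} and \Cref{cor:exact-folded-update} apply verbatim at each stage with cone scale
\[
  K_r=1+2\|P_-\widehat T\|\Lip(G_r^{\#}).
\]

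\textbf{Step 2: Lipschitz control.} This is the only genuinely new estimate. Set \(L_B:=\max_\alpha\Lip(b^{(\alpha)})\). The argument for \eqref{eq:Lip-recursion} gives
\[
  L_{r+1}\le 2T_*L_r+(N+1)M_nL_B .
\]
Induction with \(\Lambda=1+2T_*\) yields \(L_r\le (L_0+(N+1)M_nL_B)\Lambda^r\), and hence \(K_r\le C(1+M_n)\Lambda^r\).

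\textbf{Step 3: the network.} Use the same architecture as in \Cref{thm:unit-affine}: the forward memory pass by \(\mathcal Q\), the readout \(G_0^{\#}(x_n)\), and \(n\) backward blocks applying \(\mathcal P\), \(\Pi\), \(\widehat T\) and \(\mathcal C_{K_m}\). Block \(m\) adds \(b_m^{\#}(x_{n-1-m})\). Its forcing part is built as follows:
\begin{itemize}[leftmargin=2.5em]
  \item Evaluate the \(N+1\) fixed CPwL modules \(x\mapsto b^{(\alpha)\#}(x)\) in parallel at \(x_{n-1-m}\). This gives a fixed width, independent of \(n\).
  \item Form the linear combination with coefficients \(\lambda_{m,\alpha}\) in the final affine layer. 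These are numerical parameters of the block, just as \(K_m\) is.
\end{itemize}
The analogue of \Cref{lem:backward-folded-evaluation}, with \(b^{\#}\) replaced by \(b^{\#}_{n-1-j}\), gives exactness by the same backward induction. Projecting onto the first \(E\)-component and applying \Cref{lem:clamped-gluing} globalizes the result. Its compatibility conditions hold because the coordinates are unit-cell restrictions of the continuous function \(\gamma_n\), supported in \([0,L]\).

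\textbf{Main obstacle: the weight bound.} The weights now come from three sources:
\begin{itemize}[leftmargin=2.5em]
  \item the coefficients \(\lambda_{m,\alpha}\), bounded by \(M_n\);
  \item the cone switches, with weights bounded by \(C'(1+K_m)\le C(1+M_n)\Lambda^m\) by \Cref{prop:involution-cone-switch};
  \item the fixed modules, with bounded coefficients.
\end{itemize}
The delicate point is that \(M_n\) enters the cone scales only linearly, through the additive forcing term in Step 2, and never multiplicatively across stages. This yields the bound \(C(1+M_n)\Lambda^n\), after enlarging the constants as in \Cref{thm:unit-affine}.
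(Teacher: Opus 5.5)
Your proposal is correct and follows essentially the same route as the paper's proof. The ingredients match: the stage-indexed folded identity with cone scales $K_r$, the Lipschitz recursion $L_{r+1}\le 2T_*L_r+\Lip(b_r^{\#})$ with $\Lip(b_r^{\#})$ bounded by a constant times $M_n$, parallel template readouts combined with the coefficients $\lambda_{r,\alpha}$ in each backward block, and clamped gluing; your explicit induction constant $L_0+(N+1)M_nL_B$ simply makes concrete the paper's ``after enlarging the constants'' step.
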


\begin{proof}
Write
\[
  G_r:=\vect\gamma_r,
  \qquad
  b^{(\alpha)}:=\vect B^{(\alpha)},
  \qquad
  b_r:=\sum_{\alpha=0}^{N}\lambda_{r,\alpha}b^{(\alpha)}.
\]
The one-step folded identity remains valid with the stage-dependent forcing:
\begin{equation}
\label{eq:stage-dependent-folded}
  G_{r+1}^{\#}(x)
  =
  \begin{cases}
    \widehat T G_r^{\#}(\tau(x))+b_r^{\#}(x),
      &0\le x\le\frac12,\\[1mm]
    \mathcal J\widehat T G_r^{\#}(\tau(x))+b_r^{\#}(x),
      &\frac12\le x\le1.
  \end{cases}
\end{equation}

Fix \(x\in[0,1]\) and set \(x_j:=\tau^j(x)\).  For each stage \(r\), let
\(K_r\) be defined by \eqref{eq:Km-definition}, with \(G_m^\#\) there
replaced by the stage-dependent profile \(G_r^\#\).  The residual memory
controller supplies the folded residuals in the reverse order
\(x_n,x_{n-1},\ldots,x_0\).
Initialize
\[
  U_n:=G_0^\#(x_n),
\]
and, for \(j=n-1,\ldots,0\), define
\begin{equation}
\label{eq:stage-dependent-backward}
  U_j
  :=
  \mathcal C_{K_{n-1-j}}
  \bigl(x_j,\widehat T U_{j+1}\bigr)
  +
  b_{n-1-j}^{\#}(x_j).
\end{equation}
A backward induction using \eqref{eq:stage-dependent-folded} at stage
\(r=n-j-1\) yields
\begin{equation}
\label{eq:stage-dependent-backward-invariant}
  U_j=G_{n-j}^{\#}(x_j),
  \qquad j=0,\ldots,n.
\end{equation}
In particular, we have \(U_0=G_n^\#(x)\).

The finitely many template readouts \(b^{(\alpha)\#}\) are evaluated in
parallel, and at stage \(r\) their affine combination with coefficients
\(\lambda_{r,\alpha}\) produces \(b_r^\#\).  Since the number of templates
is fixed, each backward block has fixed width and depth.  The memory
controller followed by the \(n\) backward blocks therefore realizes
\(G_n=\vect\gamma_n\) on \([0,1]\) with fixed width and depth \(O(n)\).
The global realization of \(\gamma_n\) follows from
\Cref{lem:clamped-gluing}.

It remains to track the coefficient sizes.  Let
\(L_r:=\Lip(G_r^\#)\).  From \eqref{eq:stage-dependent-folded},
\[
  L_{r+1}
  \le
  2T_*L_r+\Lip(b_r^\#).
\]
Moreover, we infer
\[
  \Lip(b_r^\#)
  \le
  \sum_{\alpha=0}^{N}
  |\lambda_{r,\alpha}|\,
  \Lip\bigl(b^{(\alpha)\#}\bigr)
  \le
  C_BM_n,
\]
where \(C_B\) depends only on the fixed templates.  Hence, after enlarging
the constants if necessary, we conclude
\(L_r
  \le
  C(1+M_n)\Lambda^r,
  \qquad 0\le r\le n\).
The same estimate holds for \(K_r\).  All other modules have fixed
coefficients, while the template combinations use coefficients bounded by
\(M_n\), yielding the asserted bound.
\end{proof}

\section{Reflection-equivariant reduction}
\label{sec:reflection-equivariant}

The reflected double applies to every binary mask.  If the original refinement
rule is itself reflection equivariant, however, the doubled cascade lies in an
invariant subspace of half the dimension.

Let \(C\in\R^{p\times p}\) be an involution and define
\begin{equation}
\label{eq:physical-reflection}
  (\mathcal Rf)(t):=Cf(L-t).
\end{equation}
On the vectorization space \(E=(\R^p)^L\), define the corresponding involution
\begin{equation}
\label{eq:fiber-reflection}
  (Ju)_k:=Cu_{L-1-k},
  \qquad k=0,\ldots,L-1.
\end{equation}

\begin{lemma}[Reflection equivariance]
\label{lem:reflection-equivariance}
Suppose that
\begin{equation}
\label{eq:mask-reflection}
  A_{L-j}=CA_jC,
  \qquad j\in\Z.
\end{equation}
Then we have
\[
  V\mathcal R=\mathcal RV
  \qquad\text{and}\qquad
  T_1J=JT_0.
\]
\end{lemma}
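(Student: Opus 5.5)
Both identities follow from direct index substitutions combined with the mask symmetry \eqref{eq:mask-reflection}. The only structural input is that the hypothesis may be rewritten as \(A_{L-j}C=CA_j\), using \(C^2=I\). I will first prove the operator identity on the line and then the fiber identity for the block transition matrices.

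For \(V\mathcal R=\mathcal RV\), evaluate both sides at \(t\) for a continuous \(f\). First,
\[
(\mathcal RVf)(t)=C\sum_jA_jf(2(L-t)-j)=\sum_j CA_j\,f(2L-2t-j).
\]
Second,
\[
(V\mathcal Rf)(t)=\sum_{j'}A_{j'}Cf(L-2t+j').
\]
Reindex the second sum by \(j'=L-j\), which is a bijection of \(\Z\). The argument becomes \(2L-2t-j\) and the coefficient becomes \(A_{L-j}C=CA_jC\,C=CA_j\). The two sums therefore coincide termwise, and all sums are finite because the mask is finite.

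For \(T_1J=JT_0\), compute the \(k\)-th block of each side from \eqref{eq:Tq-definition} and \eqref{eq:fiber-reflection}. On one side,
\[
(T_1Ju)_k=\sum_{\ell=0}^{L-1}A_{1+2k-\ell}\,C\,u_{L-1-\ell}.
\]
Substituting \(\ell'=L-1-\ell\), which permutes \(\{0,\dots,L-1\}\), this equals \(\sum_{\ell'}A_{2k+2-L+\ell'}C\,u_{\ell'}\). On the other side,
\[
(JT_0u)_k=C(T_0u)_{L-1-k}=\sum_{\ell'}CA_{2L-2-2k-\ell'}\,u_{\ell'}.
\]
Set \(j=2L-2-2k-\ell'\). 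Then \(L-j=2k+2-L+\ell'\), so the rewritten hypothesis gives \(A_{2k+2-L+\ell'}C=CA_{j}\). Hence the two blocks agree coefficient by coefficient.

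I expect no genuine obstacle, only careful index bookkeeping. The one point to check is that both reindexings preserve the summation ranges: \(j\mapsto L-j\) is a bijection of \(\Z\), and \(\ell\mapsto L-1-\ell\) is a bijection of \(\{0,\dots,L-1\}\).
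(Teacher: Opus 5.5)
Your proposal is correct and follows essentially the same route as the paper. For the operator identity you reindex the sum for $V\mathcal Rf$, whereas the paper reindexes the sum for $\mathcal RVf$; this difference is immaterial. For the block identity you use the same substitution $\ell\mapsto L-1-\ell$, and both identities are closed by the mask symmetry together with $C^2=I$.
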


\begin{proof}
For the first identity, one has
\[
  (V\mathcal Rf)(t)
  =
  \sum_j A_jC f(L-2t+j),
\]
whereas the change of index \(j\mapsto L-j\) gives
\[
  (\mathcal RVf)(t)
  =
  \sum_j CA_{L-j}f(L-2t+j).
\]
These expressions agree by \eqref{eq:mask-reflection}.

For the transition matrices, using
\[
  (T_q u)_k
  =
  \sum_{\ell=0}^{L-1}A_{q+2k-\ell}u_\ell
\]
and setting \(r=L-1-\ell\), we obtain
\[
  (T_1Ju)_k
  =
  \sum_{r=0}^{L-1}
  A_{2+2k-L+r}C\,u_r
  =
  \sum_{r=0}^{L-1}
  CA_{2L-2-2k-r}u_r
  =
  (JT_0u)_k,
\]
again by \eqref{eq:mask-reflection}.
\end{proof}

Let \(\eta\in\{1,-1\}\), and suppose that the initial profile and forcing
have the same reflection parity,
\[
  \mathcal R\gamma_0=\eta\gamma_0,
  \qquad
  \mathcal RB=\eta B.
\]
Since \(V\) commutes with \(\mathcal R\), every affine iterate has the same
parity.  In vectorized form, it is
\begin{equation}
\label{eq:parity}
  G_m(1-x)=\eta JG_m(x),
  \qquad
  b(1-x)=\eta Jb(x).
\end{equation}
Consequently, the reflected doubles take values in the twisted diagonal
\[
  \Delta_\eta
  :=
  \left\{
    \begin{bmatrix}u\\ \eta Ju\end{bmatrix}
    :u\in E
  \right\}
  \subset E\oplus E.
\]
More precisely, with
\[
  \iota_\eta:E\to\Delta_\eta,
  \qquad
  \iota_\eta u
  :=
  \begin{bmatrix}u\\ \eta Ju\end{bmatrix},
\]
one has
\[
  G_m^\#(x)=\iota_\eta G_m(x),
  \qquad
  b^\#(x)=\iota_\eta b(x).
\]

The relation \(T_1J=JT_0\) shows that \(\Delta_\eta\) is invariant under
the folded update.  Indeed,
\[
  \widehat T\,\iota_\eta u
  =
  \iota_\eta(T_0u),
  \qquad
  \mathcal J\widehat T\,\iota_\eta u
  =
  \iota_\eta(\eta JT_0u).
\]
Identifying \(\Delta_\eta\) with \(E\) through \(\iota_\eta\), the folded
recursion therefore reduces to
\begin{equation}
\label{eq:parity-folded-recursion}
  G_{m+1}(x)
  =
  \begin{cases}
    T_0G_m(\tau(x))+b(x),
      &0\le x\le\frac12,\\[1mm]
    \eta JT_0G_m(\tau(x))+b(x),
      &\frac12\le x\le1.
  \end{cases}
\end{equation}
The two candidates agree at \(x=\frac12\).  Indeed, the cell-seam identity,
\eqref{eq:parity}, and \(T_1J=JT_0\) give
\[
  T_0G_m(1)
  =
  T_1G_m(0)
  =
  \eta JT_0G_m(1).
\]
Set
\(Y_m(x):=T_0G_m(\tau(x))\).
Choose a fixed diagonalizing map \(S_\eta\) for the involution \(\eta J\),
as in \Cref{prop:involution-cone-switch}, and write
\(S_\eta u=(u_+,u_-)\).
Denote by \(\tilde S_{\eta}u:=u_-\) its odd-coordinate component.  The fold
agreement proved above gives
\((\eta J)Y_m(\frac12) = Y_m(\frac12)\),
and hence
\(\tilde S_{\eta}Y_m(\frac12)=0\).
Since the tent map is \(2\)-Lipschitz, we have
\[
  \|\tilde S_{\eta}Y_m(x)\|_\infty
  \le
  2\|\tilde S_{\eta}T_0\|\,\Lip(G_m)
  |x-\textstyle\frac12|.
\]
Thus with
\(K_m = 1+2\|\tilde S_{\eta}T_0\|\,\Lip(G_m)\),
\Cref{prop:involution-cone-switch} realizes the two branches of
\eqref{eq:parity-folded-recursion} directly on \(E\).  Combining this
reduced recursion with the same residual memory controller and
clamped-gluing step as before yields the following result.

\begin{corollary}[Parity-reduced realization]
\label{cor:parity-reduced}
Under \eqref{eq:mask-reflection}, suppose that the initial profile and forcing
have the same reflection parity \(\eta\).  Then the exact fixed-width,
depth-\(O(n)\) realization may be constructed with a \(pL\)-dimensional
cascade fiber instead of the \(2pL\)-dimensional doubled fiber.
\end{corollary}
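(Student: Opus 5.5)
The plan is to repeat the proof of \Cref{thm:unit-affine} and \Cref{thm:main-binary-affine}, replacing the doubled fiber \(\widehat E\) by \(E\) and the folded recursion \eqref{eq:folded-recursion} by the reduced recursion \eqref{eq:parity-folded-recursion}.

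First, the reduced recursion must be justified. By \Cref{lem:reflection-equivariance} and the parity hypothesis, every iterate satisfies \eqref{eq:parity}. Hence \(G_m^\#=\iota_\eta G_m\) and \(b^\#=\iota_\eta b\). Applying the injective map \(\iota_\eta\) to \eqref{eq:folded-recursion}, and using \(\widehat T\iota_\eta=\iota_\eta T_0\) and \(\mathcal J\widehat T\iota_\eta=\iota_\eta\eta JT_0\), gives \eqref{eq:parity-folded-recursion}. Reading off the first block is enough, and that block is exactly \(G_{m+1}\). Next, the fold estimate displayed before the corollary gives \(\|\tilde S_\eta Y_m(x)\|_\infty\le (K_m-1)|x-\frac12|\). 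Then \Cref{prop:involution-cone-switch}, applied with \(Y=E\) and \(\mathscr J=\eta J\), yields
\[
G_{m+1}(x)=\mathcal C^{\eta J}_{K_m}\bigl(x,T_0G_m(\tau(x))\bigr)+b(x).
\]
The Lipschitz growth argument of \Cref{lem:Lip-bound} carries over verbatim, with \(T_*\) replaced by \(\max\{\|T_0\|,\|JT_0\|\}\). It gives \(K_m\le C\Lambda^m\).

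Second, the evaluation is assembled with the same memory controller. For \(x_j=\tau^j(x)\), we set \(U_n=G_0(x_n)\) and
\[
U_j=\mathcal C^{\eta J}_{K_{n-1-j}}\bigl(x_j,T_0U_{j+1}\bigr)+b(x_j).
\]
The backward induction of \Cref{lem:backward-folded-evaluation} then gives \(U_0=G_n(x)\). The folded residuals \(x_j\) are supplied in reverse order by \Cref{lem:backward-replay}, and this step is unchanged. Each backward block now carries a state in \(E\cong\R^{pL}\) instead of \(\R^{2pL}\). The block consists of one inverse memory step \(\mathcal P\), the readout \(\Pi\), the fixed linear map \(T_0\), the cone switch on \(E\) (fixed depth, width depending only on \(pL\)), and the fixed CPwL readout \(b\). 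Finally, \Cref{lem:clamped-gluing} globalizes exactly as in \Cref{thm:main-binary-affine}.

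The only point needing care is that the reduced recursion really holds on all of \([0,1]\), and not only as a consequence of the doubled one. This is handled by the exact identification through \(\iota_\eta\) described above. The rest is bookkeeping.
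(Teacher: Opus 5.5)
Your proposal is correct and follows the paper's argument. Like the paper, you identify the doubled states with \(E\) via \(\iota_\eta\) using \(T_1J=JT_0\), apply the involution cone switch on \(E\) with \(\mathscr J=\eta J\) and \(K_m=1+2\|\tilde S_\eta T_0\|\,\Lip(G_m)\), and reuse the same residual memory controller and clamped gluing. The only small difference is that you re-derive the Lipschitz growth with \(\max\{\|T_0\|,\|JT_0\|\}\), whereas the paper inherits it directly from \(\Lip(G_m)=\Lip(G_m^\#)\) and \Cref{lem:Lip-bound}.
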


\begin{example}[The L\'evy--C dragon and its anchored defect]
\label{ex:dragon-parity}
The dragon generators and anchored-profile construction were developed
in \cite{loop}.  Here we record the additional reflection-parity structure
of the L\'evy--C mask.
Let \(p=2\), \(L=1\), and set
\[
  C=
  \begin{bmatrix}
    1&0\\
    0&-1
  \end{bmatrix},
  \qquad
  A_0
  =
  \frac12
  \begin{bmatrix}
    1&-1\\
    1& 1
  \end{bmatrix},
  \qquad
  A_1
  =
  \frac12
  \begin{bmatrix}
    1& 1\\
   -1& 1
  \end{bmatrix},
\]
with \(A_j=0\) for \(j\notin\{0,1\}\).
The matrices \(A_0\) and \(A_1\) are the two similarities of the
L\'evy--C dragon \cite{Levy,DK}.  They satisfy \(A_1=CA_0C\), so the
mask is reflection equivariant.  Since its support is \(\{0,1\}\), the
corresponding refinement operator preserves \([0,1]\).

Here \(E=(\R^2)^1\cong\R^2\), and the fiber involution \(J\) is identified
with \(C\).  Hence a profile of parity \(\eta\) has its reflected state in
the twisted diagonal
\[
  \Delta_\eta
  =
  \left\{
    \begin{bmatrix}u\\ \eta Cu\end{bmatrix}
    :u\in\R^2
  \right\}.
\]
By \Cref{cor:parity-reduced}, the parity-reduced cascade is therefore
propagated on a two-dimensional fiber instead of the universal four-dimensional doubled fiber.

Let \(\e_1,\e_2\) denote the standard basis of \(\R^2\), and define the
endpoint-extended anchor profile
\[
  \gamma_{\rm a}(t)
  :=
  \begin{cases}
    0,&t\le0,\\
    t\e_1,&0\le t\le1,\\
    \e_1,&t\ge1.
  \end{cases}
\]
Since \(A_0+A_1=I_2\), the anchor mismatch
\(B_{\rm a}:=V\gamma_{\rm a}-\gamma_{\rm a}\) is compactly supported.
A direct calculation gives
\[
  B_{\rm a}(t)
  =
  \begin{cases}
    t\e_2,&0\le t\le\frac12,\\[1mm]
    (1-t)\e_2,&\frac12\le t\le1,\\[1mm]
    0,&t\notin[0,1].
  \end{cases}
\]
In particular,
\(B_{\rm a}(1-t)=-CB_{\rm a}(t)\), so \(B_{\rm a}\) has parity
\(\eta=-1\).

If \(\gamma_0=\gamma_{\rm a}\) and
\(\gamma_{m+1}=V\gamma_m\), then the compactly supported defects
\(\xi_m:=\gamma_m-\gamma_{\rm a}\) satisfy
\[
  \xi_0=0,
  \qquad
  \xi_{m+1}=V\xi_m+B_{\rm a}.
\]
Hence the endpoint-extended L\'evy--C approximants are recovered from the
parity-reduced affine recursion in the \(\eta=-1\) sector, followed by
addition of the fixed anchor \(\gamma_{\rm a}\).
\end{example}

\section{Conclusions}
\label{sec:conclusions}

We have proved an exact fixed-width, depth-\(O(n)\) ReLU realization theorem for binary
vector-valued affine refinement iterates with compactly supported CPwL initial data and forcing.
The construction removes the seam-separation requirement of the earlier offset-frame approach and
also extends to stage-dependent forcing drawn from a fixed finite-dimensional family.

The main new mechanism is universal reflection doubling.  Pairing each residual profile with its
reflection replaces the two binary transition matrices by one block matrix together with a fixed swap
involution.  The cell-seam identity makes the swap-odd component vanish at the tent fold, while its
Lipschitz control places the two branch candidates in a quantitative cone.  A fixed CPwL cone switch
can therefore select the correct branch exactly, without multiplying the evolving state by a variable
selector.

Because the primal recursion must be evaluated in reverse residual order,
we combine the folded cascade with the residual memory controller of
\cite{affine-memory}.  Reflection doubling and cone switching resolve the
binary branch ambiguity, while the injective loop-memory lift supplies exact
backward replay.  For reflection-equivariant masks, the doubled cascade
reduces to a parity sector.  Appendix~\ref{app:adjoint-homogeneous} shows
that memory can be avoided in the homogeneous case by a forward adjoint
construction for endpoint-zero atoms.

Appendix~\ref{app:fan-switch} identifies the binary cone switch as the
two-branch case of a fixed-arity fan switch.  The \(M\)-ary transition
matrices factor through one common polyphase operator preceded by
digit-dependent shifts and reflections, and adjacent candidates agree at the
zigzag folds.  Thus the folded-memory architecture extends to every fixed
arity without offset frames or a global symmetry relating all branches.
Sharper coefficient bounds, finite-precision stability, and multidimensional
affine refinement remain open directions.

\appendix

\section{Adjoint cone switching for homogeneous atoms}
\label{app:adjoint-homogeneous}

This appendix gives a memory-free forward adjoint realization for
homogeneous refinement.  For an endpoint-zero unit-interval atom, the
terminal scalar factor is incorporated into the initial adjoint state.
Rerunning the residual orbit forward then propagates the adjoint cascade,
without backward replay or a memory coordinate.

\subsection{Adjoint recursion and cone estimate}

Let \(h:[0,1]\to\R\) be CPwL with \(h(0)=h(1)=0\), extended by zero to
\(\R\), and let \(v\in\R^p\).  Define
\[
  \gamma_{h,v}(t):=h(t)v,
  \qquad
  u:=(v,0,\ldots,0)\in E.
\]
Then \(G_0=\vect(\gamma_{h,v})\) satisfies \(G_0(x)=h(x)u\) on
\([0,1]\).  Set
\[
  G_n:=\vect(V^n\gamma_{h,v}),
  \qquad
  \widehat u:=
  \begin{bmatrix}
    u\\u
  \end{bmatrix}
  \in\widehat E.
\]

We identify \(E^*\) and \(\widehat E^*\) with their Euclidean coordinate
spaces, so the adjoints of \(T_q\) and \(\widehat T\) are their
transposes and the swap involution is self-adjoint.  For
\(\psi\in E^*\), write
\[
  \jmath_0\psi:=
  \begin{bmatrix}\psi\\0\end{bmatrix},
  \qquad
  \jmath_1\psi:=
  \begin{bmatrix}0\\\psi\end{bmatrix}.
\]

Since \(h(0)=h(1)\), the rule
\[
  \rho_h\bigl(\emb([t])\bigr):=h(t)
\]
defines a CPwL function on the polygonal loop \(\Gamma\).  Fix a global
CPwL extension, still denoted by \(\rho_h\).

For \(x\in[0,1]\), let \(Y_0(x):=\emb([x/2])\),
\(Y_j(x):=F^j(Y_0(x))\), and
\(x_j:=\Pi(Y_j(x))=\tau^j(x)\).  Since
\(Y_{n+1}(x)=\emb([2^n x])\), define
\begin{equation}
\label{eq:adjoint-terminal-scalar}
  a_n(x):=\rho_h(Y_{n+1}(x))=h(R^n(x)),
\end{equation}
with endpoint values understood by continuity.

For \(\lambda\in E^*\), initialize
\begin{equation}
\label{eq:adjoint-initial-state}
  \Lambda_0(x):=a_n(x)\jmath_0\lambda,
\end{equation}
and, for \(j=0,\ldots,n-1\), set
\begin{equation}
\label{eq:adjoint-folded-update}
  \Lambda_{j+1}(x)
  :=
  \widehat T^{\trans}
  \mathcal C_{K_{j,n}}\bigl(x_j,\Lambda_j(x)\bigr),
\end{equation}
where the cone scales \(K_{j,n}\) are specified below.  Thus the second
pass reruns \(Y_j\) forward while propagating the adjoint state.

Set \(T_*^\vee:=\|\widehat T^{\trans}\|\),
\(L_h:=\Lip(h)\), and \(P_-:=(I-\mathcal J)/2\).  For fixed \(n\), choose
\begin{equation}
\label{eq:adjoint-cone-scales}
  K_{j,n}
  :=
  1+
  2^{\,n-j}\|P_-\|(T_*^\vee)^j
  L_h\|\lambda\|_\infty,
  \qquad j=0,\ldots,n-1.
\end{equation}

\begin{lemma}[Adjoint fold-cone estimate]
\label{lem:adjoint-fold-cone}
If the updates through stage \(j-1\) are exact, then
\begin{equation}
\label{eq:adjoint-cone-bound}
  \|P_-\Lambda_j(x)\|_\infty
  \le
  K_{j,n}|x_j-\textstyle\frac12|.
\end{equation}
Hence the cone switch at stage \(j\) acts as the identity for
\(x_j\le\frac12\) and as \(\mathcal J\) for \(x_j\ge\frac12\).
\end{lemma}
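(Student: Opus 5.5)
The plan is to show that, under exactness of the earlier stages, \(\Lambda_j(x)\) is the scalar \(a_n(x)\) times a vector of controlled size. It then suffices to show that \(a_n(x)\) itself vanishes linearly in \(|x_j-\frac12|\). The essential point is that the smallness comes from the terminal scalar factor \(a_n\), which is fixed once at initialization, and not from any seam identity of an evolving profile.

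\emph{Step 1 (structure of \(\Lambda_j\)).} Assume the updates at stages \(0,\ldots,j-1\) are exact. Then \(\mathcal C_{K_{i,n}}(x_i,\Lambda_i)=S_i\Lambda_i\), where \(S_i=I\) if \(x_i\le\frac12\) and \(S_i=\mathcal J\) if \(x_i\ge\frac12\). Induction on \(j\) from \eqref{eq:adjoint-initial-state} and \eqref{eq:adjoint-folded-update} gives
\[
  \Lambda_j(x)=a_n(x)\,\widehat T^{\trans}S_{j-1}\cdots\widehat T^{\trans}S_0\,\jmath_0\lambda .
\]
The swap \(\mathcal J\) is an isometry in the max norm, and \(\|\jmath_0\lambda\|_\infty=\|\lambda\|_\infty\). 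Hence
\[
  \|P_-\Lambda_j(x)\|_\infty\le \|P_-\|\,(T_*^\vee)^j\,\|\lambda\|_\infty\,|a_n(x)|.
\]

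\emph{Step 2 (vanishing of \(a_n\) near the fold of \(x_j\)).} By \eqref{eq:adjoint-terminal-scalar}, \(a_n(x)=h(R^n(x))\). Since \(R^n(x)\in\{\tau^n(x),1-\tau^n(x)\}\) and \(h(0)=h(1)=0\) with \(\Lip(h)=L_h\), we have
\[
  |a_n(x)|\le L_h\operatorname{dist}\bigl(\tau^n(x),\{0,1\}\bigr).
\]
Now \(\tau^n(x)=\tau^{n-j}(x_j)\), where \(n-j\ge1\). Moreover \(\tau^{n-j}(\frac12)\in\{0,1\}\): it equals \(1\) when \(n-j=1\) and \(0\) otherwise. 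Since \(\tau^{n-j}\) is \(2^{n-j}\)-Lipschitz,
\[
  \operatorname{dist}\bigl(\tau^n(x),\{0,1\}\bigr)\le\bigl|\tau^{n-j}(x_j)-\tau^{n-j}(\tfrac12)\bigr|\le 2^{n-j}|x_j-\tfrac12|.
\]
The endpoint conventions for \(R^n\) cause no trouble, because \(h\) vanishes at both endpoints and \(a_n\) is defined by continuity.

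\emph{Step 3 (conclusion).} Combining Steps 1 and 2 gives
\[
  \|P_-\Lambda_j(x)\|_\infty\le 2^{n-j}\|P_-\|(T_*^\vee)^jL_h\|\lambda\|_\infty|x_j-\tfrac12|\le K_{j,n}|x_j-\tfrac12|,
\]
by the choice \eqref{eq:adjoint-cone-scales}. This is \eqref{eq:adjoint-cone-bound}. \Cref{prop:involution-cone-switch}, in its swap form \eqref{eq:swap-cone-switch} with \(s=x_j-\frac12\), then gives that \(\mathcal C_{K_{j,n}}(x_j,\Lambda_j)\) equals \(\Lambda_j\) for \(x_j\le\frac12\) and \(\mathcal J\Lambda_j\) for \(x_j\ge\frac12\). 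This is the exactness needed to continue the induction.

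The only delicate point is Step 2: one must identify that the fold of \(x_j\) is mapped by \(\tau^{n-j}\) exactly onto a zero of \(h\). The remaining steps are bookkeeping on operator norms.
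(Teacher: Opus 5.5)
Your proposal is correct and follows essentially the same argument as the paper. Exactness of the earlier switches bounds $\|\Lambda_j(x)\|_\infty$ by $(T_*^\vee)^j\|\lambda\|_\infty|h(R^n(x))|$, and $\tau^{n-j}$ sends the fold $x_j=\tfrac12$ to a zero of $h$ while being $2^{n-j}$-Lipschitz, which gives the linear vanishing matched by $K_{j,n}$; your Step~1 product formula just spells out what the paper compresses into one line.
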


\begin{proof}
Exactness of the preceding updates gives
\[
  \|\Lambda_j(x)\|_\infty
  \le
  (T_*^\vee)^j\|\lambda\|_\infty |h(R^n(x))|.
\]
Since
\(R^n(x) \in \{\tau^{n-j}(x_j),1-\tau^{n-j}(x_j)\}\)
and \(\tau^{n-j}(\frac12)\in\{0,1\}\), the endpoint conditions on \(h\)
imply
\[
  |h(R^n(x))|
  \le
  L_h\,2^{\,n-j}
  |x_j-\textstyle\frac12|.
\]
Applying \(P_-\) yields \eqref{eq:adjoint-cone-bound}, and
\Cref{prop:involution-cone-switch} gives the conclusion.  At
\(x_j=\frac12\), the terminal factor vanishes, so \(\Lambda_j(x)=0\).
\end{proof}

\subsection{Exact realization and globalization}

We now identify the adjoint output with the cascade and recover the vectorized atom.

\begin{proposition}[Adjoint realization of a unit-interval atom]
\label{prop:adjoint-unit-atom}
For every \(\lambda\in E^*\), the recursion
\eqref{eq:adjoint-initial-state}--\eqref{eq:adjoint-folded-update}
satisfies
\begin{equation}
\label{eq:adjoint-final-pairing}
  \bigl\langle\Lambda_n(x),\widehat u\bigr\rangle
  =
  \bigl\langle\lambda,G_n(x)\bigr\rangle,
  \qquad x\in[0,1].
\end{equation}
This pairing admits an exact ReLU realization of fixed width and depth
\(O(n)\), with weights and biases bounded by \(C\Lambda^n\), for constants
independent of \(n\).
\end{proposition}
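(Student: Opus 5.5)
The proof has two parts: an exact algebraic identity, by duality with the primal folded recursion, and a network-complexity count.

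\emph{Exactness.} For a fixed \(n\), write the primal folded cascade of the homogeneous atom as a product of \(n\) linear maps. By \Cref{prop:folded-recursion} with \(B\equiv0\), \(G_n^{\#}(x)=M_0\widehat T M_1\widehat T\cdots M_{n-1}\widehat T\,G_0^{\#}(x_n)\), where \(M_j\in\{I,\mathcal J\}\) is the branch factor chosen by \(x_j\le\frac12\) or \(x_j\ge\frac12\). At \(x_j=\frac12\) both choices give the same result, by \eqref{eq:fold-agreement}. Since \(G_0=h\,u\), we have \(G_0^{\#}(x_n)=(h(x_n)u,h(1-x_n)u)\).

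The first step is to show that \(h(x_n)\) and \(h(1-x_n)\) are each \(\pm\)-free evaluations that combine correctly with the terminal scalar \(a_n(x)=h(R^n(x))\). I would rather avoid tracking this through the folded state. Instead I would use the unfolded identity from \Cref{lem:binary-cascade} directly: \(G_n(x)=T_{q_1}\cdots T_{q_n}u\,h(R^n(x))\), where \(q_i\) are the binary digits of \(x\). Then \(\langle\lambda,G_n(x)\rangle=a_n(x)\langle T_{q_n}^{\trans}\cdots T_{q_1}^{\trans}\lambda,u\rangle\).

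Next I would prove by induction on \(j\), using the exact switching from \Cref{lem:adjoint-fold-cone}, the explicit form \(\Lambda_j(x)=a_n(x)\,\jmath_{s_j}(T_{q_j}^{\trans}\cdots T_{q_1}^{\trans}\lambda)\). Here the sheet index \(s_j\in\{0,1\}\) records the parity of the number of right-branch folds so far. The inductive step uses three facts. First, \(\mathcal J\jmath_s=\jmath_{1-s}\). Second, \(\widehat T^{\trans}\jmath_s\psi=\jmath_s T_s^{\trans}\psi\). Third, the tent-versus-digit bookkeeping: \(q_{j+1}=s_j\oplus[x_j\ge\frac12]\), and after the switch the sheet index equals \(q_{j+1}\). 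This is the same parity bookkeeping as \(\tau^m(x)\in\{R^m(x),1-R^m(x)\}\).

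At stage \(n\) the pairing with \(\widehat u=(u,u)\) does not depend on the sheet, because both blocks of \(\widehat u\) equal \(u\). This gives \eqref{eq:adjoint-final-pairing}. At fold points the digit choice is ambiguous, but there \(a_n=0\) by the vanishing noted at the end of the proof of \Cref{lem:adjoint-fold-cone}, so both sides are \(0\). I expect this indexing — matching the sheet parity to the binary digits, including the conventions at \(x=1\) and at dyadic points — to be the main obstacle. I would handle it by proving the identity on the open set where no \(x_j=\frac12\), and then extending by continuity of both sides.

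\emph{Complexity.} The network runs the loop \(Y_j=F^j(Y_0)\) forward. It needs \(a_n=\rho_h(Y_{n+1})\) before the adjoint pass can start, so it first runs \(n+1\) copies of \(F\) to compute \(a_n\). It then reruns \(Y_0\) forward in parallel with the adjoint state, recomputing \(Y_0\) from \(x\) and carrying \(x\) along in one extra channel. Each of the \(n\) blocks applies the fixed maps \(F\), \(\Pi\), the cone switch \(\mathcal C_{K_{j,n}}\) from \Cref{prop:involution-cone-switch}, and \(\widehat T^{\trans}\).

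The initial product \(a_n\jmath_0\lambda\) is a fixed scalar \(a_n\) times a constant vector, so it is linear in \(a_n\). The final pairing is linear. The width is therefore fixed and the depth is \(O(n)\). The weights are bounded by \(C(1+K_{j,n})\le C(2+T_*^\vee)^n\), which gives \(C\Lambda^n\).

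Finally, choosing \(\lambda\) ranging over the coordinate functionals of \(E\) and computing these pairings in parallel recovers all of \(G_n(x)\) with fixed width, and \Cref{lem:clamped-gluing} then globalizes the result.
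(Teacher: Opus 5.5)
Your proposal is correct and follows essentially the same route as the paper. The paper also inducts on $\Lambda_j=a_n\,\jmath_{\varepsilon_j}\psi_j$ with $\psi_j$ a product of $T_q^{\trans}$ applied to $\lambda$, uses $q_j=\varepsilon_j\oplus\delta_j$ and $\varepsilon_{j+1}=q_j$ (your ``parity of right-branch folds'' sheet index is exactly this $\varepsilon_j$), proves the identity away from the level-$n$ dyadic points and extends by continuity, and then makes the same two-pass forward-loop complexity count with $K_{j,n}\le C\max\{2,T_*^\vee\}^n$. Your opening detour through the primal folded product and the closing clamped-gluing remark are not needed for this statement, but they do no harm.
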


\begin{proof}
By \Cref{lem:adjoint-fold-cone}, every cone switch performs the ideal
branch operation.  We first work away from the dyadic points of level
\(n\), and then extend the result by continuity.

Set \(q_j:=Q(R^j(x))\), \(j=0,\ldots,n-1\).  Choose
\(\varepsilon_j\in\{0,1\}\), with \(\varepsilon_0=0\), so that
\(R^j(x)=x_j\) for \(\varepsilon_j=0\) and
\(R^j(x)=1-x_j\) for \(\varepsilon_j=1\).  If
\(\delta_j=0\) for \(x_j<\frac12\) and \(\delta_j=1\) for
\(x_j>\frac12\), then
\[
  q_j=\varepsilon_j\mathbin{\oplus}\delta_j,
  \qquad
  \varepsilon_{j+1}=q_j.
\]

Suppose that
\(\Lambda_j=a_n\,\jmath_{\varepsilon_j}\psi_j\).  The cone switch moves
the active sheet to \(q_j\), and the subsequent adjoint update gives
\[
  \Lambda_{j+1}
  =
  a_n\,\jmath_{\varepsilon_{j+1}}
  \bigl(T_{q_j}^{\trans}\psi_j\bigr).
\]
Since \(\psi_0=\lambda\), induction yields
\[
  \psi_n
  =
  T_{q_{n-1}}^{\trans}\cdots T_{q_0}^{\trans}\lambda.
\]
Using \(a_n=h(R^n(x))\) and \(\widehat u=(u,u)\), we obtain
\[
  \langle\Lambda_n,\widehat u\rangle
  =
  h(R^n(x))
  \left\langle
    T_{q_{n-1}}^{\trans}\cdots T_{q_0}^{\trans}\lambda,u
  \right\rangle 
  =
  \left\langle
    \lambda,
    T_{q_0}\cdots T_{q_{n-1}}h(R^n(x))u
  \right\rangle
  =
  \langle\lambda,G_n(x)\rangle.
\]

A first loop pass of length \(n+1\) computes \(a_n\) while carrying the
input \(x\).  A second pass reruns \(Y_j\) forward and applies one adjoint
update at each stage, followed by the linear pairing with \(\widehat u\).
All repeated modules have fixed complexity, so the width is fixed and the
depth is \(O(n)\).  Finally,
\eqref{eq:adjoint-cone-scales} gives
\(K_{j,n}\le C\max\{2,T_*^\vee,1\}^n\), which yields the stated
exponential coefficient bound.
\end{proof}

\begin{corollary}[Unit-interval homogeneous atom]
\label{cor:adjoint-unit-atom}
The vectorization \(\vect(V^n\gamma_{h,v})\) on \([0,1]\) admits an exact
ReLU realization of fixed width and depth \(O(n)\), with weights and
biases bounded by \(C\Lambda^n\).
\end{corollary}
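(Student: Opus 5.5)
The corollary asks for the full vector $\vect(V^n\gamma_{h,v})(x)=G_n(x)\in E$, whereas \Cref{prop:adjoint-unit-atom} realizes only scalar pairings. The plan is to recover $G_n(x)$ coordinate by coordinate by applying \Cref{prop:adjoint-unit-atom} to each standard basis covector of $E^*$.

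\textbf{Coordinatewise networks.} Let $\lambda=\e_i$ run over the $D=pL$ standard basis covectors of $E^*\cong\R^{D}$. By \eqref{eq:adjoint-final-pairing}, the scalar network for $\lambda=\e_i$ outputs $\langle\e_i,G_n(x)\rangle$, which is exactly the $i$-th coordinate of $G_n(x)$. Each of these $D$ networks has fixed width, depth $O(n)$, and coefficients bounded by $C\Lambda^n$. Since $\|\e_i\|_\infty=1$, the cone scales \eqref{eq:adjoint-cone-scales} do not depend on $i$, so the constants can be taken uniform in $i$.

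\textbf{Running them in parallel.} All $D$ networks share the same depth, because the loop passes have length $n+1$ and $n$ for every $\lambda$. They can therefore be stacked side by side. In fact the first loop pass, which computes $a_n(x)$ and carries $x$, together with the rerun of $Y_j$, is independent of $\lambda$ and can be shared. The $D$ adjoint states are then propagated in parallel. Equivalently, one can propagate a single matrix-valued adjoint state $\Lambda_j\in\widehat E^{*}\otimes\R^{D}$ initialized with $\lambda=I$. This works because the cone switch acts coordinatewise and the cone estimate of \Cref{lem:adjoint-fold-cone} holds column by column.

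\textbf{Complexity and exactness.} The resulting network has width at most $D$ times a fixed constant and depth $O(n)$, and its weights are still bounded by $C\Lambda^n$. Stacking its outputs gives $G_n$ on $[0,1]$. Exactness at dyadic points, where the proof of \Cref{prop:adjoint-unit-atom} first works away from these points, follows by continuity of both sides.

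\textbf{Main obstacle.} The only delicate point is making sure the parallel copies are synchronized, so that identity layers are not needed to pad mismatched depths. Synchronization holds here because the pass lengths are independent of $\lambda$. Identity padding costs nothing in any case, since $s=\sigma(s)-\sigma(-s)$ passes a value through one ReLU layer at width cost $2$.
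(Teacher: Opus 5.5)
Your proposal is correct and follows the paper's proof, which likewise obtains the corollary by applying \Cref{prop:adjoint-unit-atom} in parallel to the $pL$ standard coordinate covectors of $E^*$. Your further remarks are sound refinements of the same argument: the cone scales are uniform in $i$ because $\|\e_i\|_\infty=1$, and the $\lambda$-independent loop passes can be shared.
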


\begin{proof}
Apply \Cref{prop:adjoint-unit-atom} in parallel to the standard coordinate
covectors of \(E^*\).
\end{proof}

\begin{remark}[Passage to general homogeneous data]
\label{rem:adjoint-globalization}
A sufficiently fine nodal-hat decomposition writes every compactly
supported vector-valued CPwL seed as a finite sum of translates of
endpoint-zero unit-interval atoms.  Translation covariance, finite
parallelization, and standard clamped gluing then give the global
homogeneous realization.  These reductions originate in \cite{source}
and were subsequently used for vector-valued refinement in \cite{loop};
we omit the familiar details.
\end{remark}

\section{Polyphase shifts and fan switching for fixed arity}
\label{app:fan-switch}

This appendix extends the folded construction to every fixed arity
\(M\ge2\).  Although the \(M\) branch maps need not form a symmetry orbit,
their digit dependence factors through finite coordinate shifts.  After
reflection doubling, adjacent shift--reflection candidates agree at the
folds of the \(M\)-branch zigzag map, so a finite chain of cone switches,
called a fan switch, yields an exact folded recursion.  All constants and
network widths below may depend on \(M\).

\subsection{Polyphase factorization and zigzag folding}

Let
\[
  (V_Mf)(t)=\sum_{j\in\Z}A_jf(Mt-j),
\]
where the matrix mask is finitely supported, and assume that \(V_M\)
preserves \([0,L]\).  Set \(E=(\R^p)^L\), with blocks indexed by
\(k=0,\ldots,L-1\), and define
\[
  (\vect f)(x)
  =
  \bigl(f(x),f(x+1),\ldots,f(x+L-1)\bigr)^{\trans},
  \qquad x\in[0,1].
\]
For \(q=0,\ldots,M-1\), let \(T_q:E\to E\) be given by
\begin{equation}
\label{eq:mary-transition}
  (T_qu)_k
  =
  \sum_{\ell=0}^{L-1}A_{q+Mk-\ell}u_\ell .
\end{equation}

To expose the shift structure, introduce the padded fiber
\[
  I^+:=\{-(M-1),\ldots,L-1\},
  \qquad
  E^+:=(\R^p)^{I^+}.
\]
Extend \(u\in E\) by zero outside \(\{0,\ldots,L-1\}\), and define
\[
  (\Sigma_qu)_r:=u_{r+q},
  \qquad r\in I^+.
\]
Finally, define the common polyphase operator
\(\mathsf T:E^+\to E\) by
\begin{equation}
\label{eq:common-polyphase}
  (\mathsf Tw)_k
  :=
  \sum_{r\in I^+}A_{Mk-r}w_r .
\end{equation}

\begin{lemma}[Padded polyphase factorization]
\label{lem:polyphase-factorization}
For \(q=0,\ldots,M-1\), we have
\begin{equation}
\label{eq:Tq-polyphase}
  T_q=\mathsf T\Sigma_q.
\end{equation}
Moreover, \(\|\Sigma_q\|\le1\) in the max norm.
\end{lemma}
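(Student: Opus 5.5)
The factorization is a direct reindexing of the defining sums, and I would verify it blockwise. Fix \(u\in E\), extend it by zero to all integer indices, and set \(w:=\Sigma_qu\in E^+\), so that \(w_r=u_{r+q}\) for \(r\in I^+\). For \(k=0,\ldots,L-1\), the common polyphase operator \eqref{eq:common-polyphase} gives
\[
  (\mathsf T\Sigma_qu)_k=\sum_{r\in I^+}A_{Mk-r}u_{r+q}.
\]
I will substitute \(\ell:=r+q\). As \(r\) runs over \(I^+=\{-(M-1),\ldots,L-1\}\) and \(q\in\{0,\ldots,M-1\}\), the index \(\ell\) runs over \(\{q-(M-1),\ldots,L-1+q\}\). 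This set contains \(\{0,\ldots,L-1\}\), because \(q-(M-1)\le0\) and \(L-1+q\ge L-1\).

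The zero extension kills every term with \(\ell\notin\{0,\ldots,L-1\}\). The sum therefore reduces to
\[
  \sum_{\ell=0}^{L-1}A_{Mk-\ell+q}u_\ell=\sum_{\ell=0}^{L-1}A_{q+Mk-\ell}u_\ell=(T_qu)_k,
\]
which is \eqref{eq:mary-transition}. This proves \eqref{eq:Tq-polyphase}. The only point that needs care is the covering of \(\{0,\ldots,L-1\}\) by the shifted index range, and this is exactly why the padding runs down to \(-(M-1)\). Nonzero coordinates of \(\Sigma_qu\) occur only at \(r\in\{-q,\ldots,L-1-q\}\subset I^+\), so no term of \(T_qu\) is lost.

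For the norm bound, each coordinate \((\Sigma_qu)_r\) is either a block \(u_{r+q}\) of \(u\) or zero. Hence, in the max norm over blocks,
\[
  \|\Sigma_qu\|_\infty\le\|u\|_\infty,
\]
and therefore \(\|\Sigma_q\|\le1\).

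I do not expect a real obstacle here. The only thing to watch is consistency of the zero-extension convention: it is used both in defining \(\Sigma_q\) and in truncating the reindexed sum back to the range \(0\le\ell\le L-1\). The support-window hypothesis is not needed for the identity itself, since the matrices \(A_j\) enter identically on both sides.
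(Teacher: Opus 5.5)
Your proof is correct and uses the same reindexing \(\ell=r+q\) (equivalently \(r=\ell-q\)) as the paper, together with the same observation that every entry of \(\Sigma_qu\) is either a block of \(u\) or zero, which gives the norm bound. You also spell out why the shifted range \(\{q-(M-1),\ldots,L-1+q\}\) covers \(\{0,\ldots,L-1\}\), a step the paper leaves implicit.
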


\begin{proof}
With \(r=\ell-q\), we have
\[
  (\mathsf T\Sigma_qu)_k
  =
  \sum_{\ell=0}^{L-1}A_{q+Mk-\ell}u_\ell
  =
  (T_qu)_k.
\]
The norm bound is immediate.
\end{proof}

For \(x\in[q/M,(q+1)/M]\), define
\begin{equation}
\label{eq:mary-zigzag}
  \tau_M(x)
  :=
  \begin{cases}
    Mx-q,&q\ \text{even},\\
    q+1-Mx,&q\ \text{odd}.
  \end{cases}
\end{equation}
Then \(\tau_M:[0,1]\to[0,1]\) is CPwL, and for
\(R_M(x):=Mx-\lfloor Mx\rfloor\) one has
\[
  R_M(x)=
  \begin{cases}
    \tau_M(x),&q\ \text{even},\\
    1-\tau_M(x),&q\ \text{odd},
  \end{cases}
\]
on the \(q\)-th branch.

Set \(\widehat E:=E\oplus E\) and
\(\widehat E^+:=E^+\oplus E^+\), and let \(\mathcal J\) denote the swap
involution on either space.  Writing \(\bar q:=M-1-q\), define
\begin{equation}
\label{eq:Pq-definition}
  \mathcal P_q
  :=
  \begin{bmatrix}
    \Sigma_q&0\\
    0&\Sigma_{\bar q}
  \end{bmatrix}
  \mathcal J^q
  :
  \widehat E\to\widehat E^+,
\end{equation}
where \(\mathcal J^q=I\) for even \(q\) and
\(\mathcal J^q=\mathcal J\) for odd \(q\).  Also set
\[
  \widehat{\mathsf T}
  :=
  \begin{bmatrix}
    \mathsf T&0\\
    0&\mathsf T
  \end{bmatrix}.
\]

Let \(W_Mf:=V_Mf+B\), \(b:=\vect B\), and
\(G_m:=\vect(W_M^mf_0)\).  Define
\[
  G_m^\#(x):=
  \begin{bmatrix}G_m(x)\\G_m(1-x)\end{bmatrix},
  \qquad
  b^\#(x):=
  \begin{bmatrix}b(x)\\b(1-x)\end{bmatrix}.
\]

\begin{proposition}[Fixed-arity folded recursion]
\label{prop:mary-folded-branches}
For \(x\in[q/M,(q+1)/M]\),
\begin{equation}
\label{eq:mary-folded-branch}
  G_{m+1}^\#(x)
  =
  \widehat{\mathsf T}\,
  \mathcal P_qG_m^\#(\tau_M(x))
  +
  b^\#(x).
\end{equation}
\end{proposition}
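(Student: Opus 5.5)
We follow the proof of \Cref{prop:folded-recursion}, now with $M$ branches and the polyphase factorization of \Cref{lem:polyphase-factorization}.

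\emph{Step 1: the $M$-ary cascade identity.} Repeating the argument of \Cref{lem:binary-cascade} with $Mx=Q_M(x)+R_M(x)$ gives
\[
G_{m+1}(x)=T_qG_m(Mx-q)+b(x),\qquad x\in[q/M,(q+1)/M].
\]
Applying the same identity at the reflected point, note that $1-x\in[\bar q/M,(\bar q+1)/M]$ with $\bar q=M-1-q$, and
\[
M(1-x)-\bar q=1-(Mx-q).
\]
Hence
\[
G_{m+1}(1-x)=T_{\bar q}G_m(1-(Mx-q))+b(1-x).
\]
Rewriting $T_q=\mathsf T\Sigma_q$ and $T_{\bar q}=\mathsf T\Sigma_{\bar q}$ by \eqref{eq:Tq-polyphase}, we obtain
\[
G_{m+1}^\#(x)=\widehat{\mathsf T}
\begin{bmatrix}\Sigma_q&0\\0&\Sigma_{\bar q}\end{bmatrix}
\begin{bmatrix}G_m(Mx-q)\\G_m(1-(Mx-q))\end{bmatrix}+b^\#(x).
\]

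\emph{Step 2: the parity case split.} It remains to identify the column vector with $\mathcal J^qG_m^\#(\tau_M(x))$.
\begin{itemize}
\item If $q$ is even, then $\tau_M(x)=Mx-q$, so the vector is exactly $G_m^\#(\tau_M(x))$, and $\mathcal J^q=I$.
\item If $q$ is odd, then $\tau_M(x)=q+1-Mx$, so $Mx-q=1-\tau_M(x)$. The vector is then $(G_m(1-\tau_M(x)),G_m(\tau_M(x)))$, which equals $G_m^\#(1-\tau_M(x))=\mathcal JG_m^\#(\tau_M(x))$.
\end{itemize}
In both cases the vector is $\mathcal J^qG_m^\#(\tau_M(x))$, which is \eqref{eq:mary-folded-branch} by \eqref{eq:Pq-definition}.

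\emph{Step 3: closed branch intervals.} At interior points, where $Q_M$ is locally constant, the formula is exact. At the endpoints $x=q/M$ and $x=(q+1)/M$, both sides are continuous in $x$ on the closed interval. By convention the branch formula is understood by continuous extension, so it extends to the closed interval. This uses the continuity of $W_M^mf_0$ on $\R$, exactly as in \Cref{lem:binary-cascade}.

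\emph{Main obstacle.} The only delicate point is the index bookkeeping in Step 1. One must confirm that the zero-padding in $E^+$ covers every index $\ell-q$ that occurs. Since $\ell-q\ge -(M-1)$, this holds, and the reflected branch index $\bar q$ lands in the correct interval.
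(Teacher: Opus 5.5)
Your proof is correct and follows essentially the same route as the paper's. Both apply the $M$-ary cascade identity at $x$ and at the reflected point $1-x$ in branch $\bar q$, split on the parity of $q$ to produce $\mathcal J^q$, and use $T_q=\mathsf T\Sigma_q$ to rewrite $\operatorname{diag}(T_q,T_{\bar q})\mathcal J^q$ as $\widehat{\mathsf T}\mathcal P_q$. Your use of $Mx-q$ in place of $R_M(x)$ is slightly cleaner, since it makes the branch identity valid directly on the closed intervals; the paper's relation $R_M(1-x)=1-R_M(x)$ fails at the grid points $x\in\frac1M\mathbb Z$.
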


\begin{proof}
On the \(q\)-th branch,
\(G_{m+1}(x)=T_qG_m(R_M(x))+b(x)\).  Since \(1-x\) lies in branch
\(\bar q\) and \(R_M(1-x)=1-R_M(x)\), reflection doubling gives the
branch matrix
\[
  \begin{bmatrix}
    T_q&0\\
    0&T_{\bar q}
  \end{bmatrix}
  \mathcal J^q,
\]
where \(\mathcal J^q=I\) for even \(q\) and
\(\mathcal J^q=\mathcal J\) for odd \(q\).  By
\Cref{lem:polyphase-factorization}, this matrix equals
\(\widehat{\mathsf T}\mathcal P_q\), which proves
\eqref{eq:mary-folded-branch}.
\end{proof}

The key point is that adjacent branch maps agree at the folds before the
common operator \(\widehat{\mathsf T}\) is applied.
Let \(\mathsf D:E\to E\) be the cell shift
\((\mathsf Du)_k:=u_{k+1}\), with zero extension at the last block.  If
\(G=\vect f\) for a continuous function supported in \([0,L]\), then we have
\begin{equation}
\label{eq:G-endpoint-shift}
  G(1)=\mathsf DG(0).
\end{equation}
Moreover, if \(u_0=0\), then one has
\begin{equation}
\label{eq:Sigma-D}
  \Sigma_a\mathsf Du=\Sigma_{a+1}u,
  \qquad a=0,\ldots,M-2,
\end{equation}
with the only possible boundary discrepancy being the vanishing entry \(u_0\).

\begin{lemma}[Adjacent fold agreement]
\label{lem:fan-fold-agreement}
Let \(G=\vect f\) for a continuous function supported in \([0,L]\), and
set \(c_q:=q/M\).  Then, for \(q=1,\ldots,M-1\),
\begin{equation}
\label{eq:pre-mask-agreement}
  \mathcal P_{q-1}G^\#(\tau_M(c_q))
  =
  \mathcal P_qG^\#(\tau_M(c_q)).
\end{equation}
\end{lemma}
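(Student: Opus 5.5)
The plan is to compute both sides of \eqref{eq:pre-mask-agreement} explicitly at the fold point. I would then reduce each block to the shift identities \eqref{eq:G-endpoint-shift} and \eqref{eq:Sigma-D}.

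First I evaluate \(\tau_M(c_q)\). The point \(c_q=q/M\) is the right endpoint of branch \(q-1\) and the left endpoint of branch \(q\). Using \eqref{eq:mary-zigzag}:
\begin{itemize}
  \item if \(q\) is even, branch \(q\) gives \(Mc_q-q=0\), and the odd branch \(q-1\) gives \((q-1)+1-Mc_q=0\);
  \item if \(q\) is odd, both formulas give \(1\).
\end{itemize}
So \(\tau_M(c_q)\) is well defined, equal to \(0\) for even \(q\) and \(1\) for odd \(q\). Correspondingly \(G^\#(\tau_M(c_q))\) is \((G(0),G(1))\) or \((G(1),G(0))\).

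Next I unpack \(\mathcal P_{q-1}\) and \(\mathcal P_q\) from \eqref{eq:Pq-definition}. Exactly one of \(q-1,q\) is odd, and the swap \(\mathcal J\) is applied for that one. In both parity cases the swap reorders the pair so that the two sides become the same pair of comparisons:
\[
  \mathcal P_{q-1}G^\#(\tau_M(c_q))=\begin{bmatrix}\Sigma_{q-1}G(1)\\ \Sigma_{M-q}G(0)\end{bmatrix},
  \qquad
  \mathcal P_{q}G^\#(\tau_M(c_q))=\begin{bmatrix}\Sigma_{q}G(0)\\ \Sigma_{M-1-q}G(1)\end{bmatrix}.
\]
Here \(\overline{q-1}=M-q\) and \(\bar q=M-1-q\). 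I would verify this display by direct substitution in each of the two parity cases; that check is routine.

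It then remains to prove the two identities
\[
  \Sigma_{q-1}G(1)=\Sigma_qG(0),
  \qquad
  \Sigma_{M-1-q}G(1)=\Sigma_{M-q}G(0).
\]
For both I substitute \(G(1)=\mathsf DG(0)\) from \eqref{eq:G-endpoint-shift} and apply \eqref{eq:Sigma-D} with \(u=G(0)\), taking \(a=q-1\) and \(a=M-1-q\) respectively. Since \(1\le q\le M-1\), both values of \(a\) lie in \(\{0,\ldots,M-2\}\), so \eqref{eq:Sigma-D} is applicable. Its hypothesis \(u_0=0\) holds because \(G(0)_0=f(0)=0\), as \(f\) is continuous and supported in \([0,L]\).

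The main point needing care is \eqref{eq:Sigma-D} itself, including the boundary behaviour on the padded index set \(I^+\). I would check it entrywise. For \(r\in I^+\) we have \((\Sigma_a\mathsf Du)_r=(\mathsf Du)_{r+a}\), which equals \(u_{r+a+1}\) whenever \(0\le r+a\le L-2\). At the top index \(r+a=L-1\), both sides vanish: \(\mathsf D\) zero-extends at the last block, and \(u_L=0\) by the extension convention. The only mismatch occurs at \(r+a=-1\): the left side is \(0\) by zero extension, while the right side is \(u_0\). That entry vanishes by the endpoint condition, so the two sides agree everywhere.
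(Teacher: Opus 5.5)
Your proposal is correct and follows essentially the same route as the paper: you evaluate \(\tau_M(c_q)\in\{0,1\}\) by the parity of \(q\), and then apply \(G(1)=\mathsf DG(0)\) together with \eqref{eq:Sigma-D} to \(u=G(0)\), where \(u_0=f(0)=0\), so that both sides reduce to \((\Sigma_qG(0),\Sigma_{M-q}G(0))^{\trans}\), exactly as in the paper. Your entrywise check of \eqref{eq:Sigma-D} on the padded index set \(I^+\), which the paper only asserts, is correct and a useful addition.
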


\begin{proof}
Write \(g:=G(0)\).  Then \(g_0=f(0)=0\) and
\(G(1)=\mathsf Dg\).  Since \(\tau_M(c_q)=1\) for odd \(q\) and
\(\tau_M(c_q)=0\) for even \(q\), \eqref{eq:Sigma-D} shows that both
sides of \eqref{eq:pre-mask-agreement} equal
\((\Sigma_qg,\Sigma_{M-q}g)^\trans\).
\end{proof}

\subsection{The fan switch}

For vectors \(u,v\) in a fixed coordinate space and \(c\in\R\), define
\begin{equation}
\label{eq:agreement-switch}
  \mathcal A_{K,c}(x;u,v)
  :=
  \frac{u+v}{2}
  +
  \Phi_K\big(x-c,\frac{u-v}{2}\big),
\end{equation}
where \(\Phi_K\) is the coordinatewise cone flip from
\eqref{eq:scalar-cone-flip}.  By \Cref{lem:scalar-cone-flip}, whenever
\(\frac12\|u-v\|_\infty\le K|x-c|\),
\begin{equation}
\label{eq:agreement-switch-exact}
  \mathcal A_{K,c}(x;u,v)
  =
  \begin{cases}
    u,&x\le c,\\
    v,&x\ge c.
  \end{cases}
\end{equation}
Thus \(\mathcal A_{K,c}\) is a global CPwL agreement switch of fixed
width and depth, with weights bounded by \(C(1+K)\).
For \(u\in\widehat E\), define
\begin{align}
  \mathfrak F_{K,0}(x,u)
  &:=
  \mathcal P_0u,
  \label{eq:fan-initial}\\
  \mathfrak F_{K,q}(x,u)
  &:=
  \mathcal A_{K,c_q}
  \bigl(x;\mathfrak F_{K,q-1}(x,u),\mathcal P_qu\bigr),
  \qquad q=1,\ldots,M-1,
  \label{eq:fan-recursion}
\end{align}
and set \(\mathfrak F_K:=\mathfrak F_{K,M-1}\).  Since \(M\) is fixed,
this \(M\)-branch fan switch is a fixed-width, fixed-depth CPwL module.

\begin{proposition}[Exact fan switching]
\label{prop:exact-fan-switch}
Let \(G=\vect f\) be Lipschitz, where \(f\) is continuous and supported in
\([0,L]\).  If
\(K\ge M\Lip(G^\#)\),
then, for \(x\in[q/M,(q+1)/M]\), we have
\begin{equation}
\label{eq:fan-exactness}
  \mathfrak F_K\bigl(x,G^\#(\tau_M(x))\bigr)
  =
  \mathcal P_qG^\#(\tau_M(x)).
\end{equation}
\end{proposition}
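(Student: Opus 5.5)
The plan is to prove, by induction on \(q\), a stronger statement describing every partial fan output \(\mathfrak F_{K,q}\). Fix \(x\in[c_r,c_{r+1}]\), write \(u:=G^\#(\tau_M(x))\), and aim for
\[
  \mathfrak F_{K,q}(x,u)=\mathcal P_{\min(r,q)}u,\qquad q=0,\ldots,M-1 .
\]
Taking \(q=M-1\) then gives \eqref{eq:fan-exactness}. The base case \(q=0\) is \eqref{eq:fan-initial}.

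For the inductive step, \eqref{eq:fan-recursion} and \eqref{eq:agreement-switch-exact} reduce everything to the cone condition
\[
  \tfrac12\bigl\|\mathfrak F_{K,q-1}(x,u)-\mathcal P_qu\bigr\|_\infty\le K|x-c_q| .
\]
Once this holds, the agreement switch returns \(\mathfrak F_{K,q-1}(x,u)=\mathcal P_{\min(r,q-1)}u\) when \(x\le c_q\), and \(\mathcal P_qu\) when \(x\ge c_q\). The second case forces \(r\ge q\), so both cases equal \(\mathcal P_{\min(r,q)}u\). At \(x=c_q\) the two candidates coincide by \Cref{lem:fan-fold-agreement}, so there is no ambiguity.

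The central estimate concerns the adjacent differences
\[
  d_s(x):=\mathcal P_{s-1}G^\#(\tau_M(x))-\mathcal P_sG^\#(\tau_M(x)).
\]
By \Cref{lem:fan-fold-agreement}, \(d_s(c_s)=0\). Each \(\mathcal P_s\) is a composition of shifts \(\Sigma\) of norm at most \(1\) (\Cref{lem:polyphase-factorization}) with the isometry \(\mathcal J\). Moreover, \(\tau_M\) is \(M\)-Lipschitz. Together these give
\[
  \|d_s(x)\|_\infty\le 2M\Lip(G^\#)\,|x-c_s| .
\]
In the adjacent case, namely \(x\ge c_q\), or \(x\le c_q\) with \(r=q-1\), the difference is exactly \(\pm d_q(x)\). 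The cone condition then holds precisely when \(K\ge M\Lip(G^\#)\), which is the hypothesis.

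The main obstacle is the non-adjacent regime \(x\le c_{q-1}\), i.e.\ \(r<q-1\). There the difference is \(\mathcal P_ru-\mathcal P_qu=\sum_{s=r+1}^q d_s(x)\). Each term satisfies \(|x-c_s|\le|x-c_q|\), but naive telescoping only yields the bound \((q-r)M\Lip(G^\#)|x-c_q|\). This loses a factor up to \(M-1\) compared with the stated \(K\ge M\Lip(G^\#)\).

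I would first try to avoid this loss by exploiting the explicit shift structure. By \eqref{eq:Sigma-D} and \(g_0=0\), at each fold the vectors \(\mathcal P_s G^\#\) are coordinate shifts of the same data \(g=G(0)\) and \(\mathsf Dg=G(1)\). So \(\mathcal P_ru-\mathcal P_qu\) can be compared coordinatewise with a single fold, and the max norm, rather than a sum, should control it. If that fails, I would accept the telescoped bound and strengthen the hypothesis to \(K\ge M^2\Lip(G^\#)\). That change is harmless for the architecture, since \(M\) is fixed and only the constant in the exponential weight bound \(C(1+K)\) changes. Finally, the endpoints of the branch intervals are covered by the fold agreement, so exactness holds on all of \([0,1]\).
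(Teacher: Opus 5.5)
Your induction scaffold is the same as the paper's: the strengthened claim $\mathfrak F_{K,q}(x,u)=\mathcal P_{\min(r,q)}u$, the reduction to the cone condition of the agreement switch, and the adjacent-case bound via \Cref{lem:fan-fold-agreement}. The proof is incomplete exactly where you say it is. In the non-adjacent regime $r<q-1$ you only obtain $\tfrac12\|\mathcal P_ru-\mathcal P_qu\|_\infty\le (q-r)M\Lip(G^\#)|x-c_q|$. So the cone condition is not verified under the stated hypothesis $K\ge M\Lip(G^\#)$. Replacing the hypothesis by $K\ge M^2\Lip(G^\#)$ proves a weaker statement, not the proposition. The shift-structure idea is not carried out, and it is not what is needed: the identities \eqref{eq:Sigma-D} hold only at the fold points, so by themselves they cannot control the difference at a generic $x$.

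The loss comes from telescoping in the branch index at the fixed point $x$. Each $d_s(x)$ is measured over the whole interval from $x$ to $c_s$, and these intervals overlap. Telescope instead along the path from $x$ to $c_q$ through the intermediate folds. Set $H_s(x):=\mathcal P_sG^\#(\tau_M(x))$; each $H_s$ is $M\Lip(G^\#)$-Lipschitz on all of $[0,1]$. Fold agreement $H_{s-1}(c_s)=H_s(c_s)$ then gives, for $x\in[c_r,c_{r+1}]$ with $r<q$,
\[
H_r(x)-H_q(x)=\bigl[H_r(x)-H_r(c_{r+1})\bigr]+\sum_{s=r+1}^{q-1}\bigl[H_s(c_s)-H_s(c_{s+1})\bigr]+\bigl[H_q(c_q)-H_q(x)\bigr].
\]
The first two groups are increments over consecutive disjoint intervals of total length $c_q-x$, so together they are bounded by $M\Lip(G^\#)(c_q-x)$. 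The last term has the same bound. Hence $\tfrac12\|H_r(x)-H_q(x)\|_\infty\le M\Lip(G^\#)|x-c_q|\le K|x-c_q|$, with no dependence on $q-r$.

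This is exactly the paper's argument, phrased differently. The output after $q-1$ switches is the continuous concatenation of $H_0,\dots,H_{q-1}$, so it is itself globally $M\Lip(G^\#)$-Lipschitz, and it agrees with $H_q$ at $c_q$. That single estimate covers the adjacent and non-adjacent cases at once, so no strengthening of the hypothesis on $K$ is required.
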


\begin{proof}
Set \(H_q(x):=\mathcal P_qG^\#(\tau_M(x))\).  Since
\(\|\mathcal P_q\|\le1\) and \(\Lip(\tau_M)=M\), each \(H_q\) is Lipschitz with its constant bounded by \(M\Lip(G^\#)\).  
By \Cref{lem:fan-fold-agreement}, one has \(H_{q-1}(c_q)=H_q(c_q)\).

After the first \(q-1\) switches, the valid output is the continuous
piecewise curve assembled from \(H_0,\ldots,H_{q-1}\).  It has Lipschitz
constant at most \(M\Lip(G^\#)\) and agrees with \(H_q\) at \(c_q\).
Hence we have
\[
  \textstyle\frac12\|H_{\mathrm{old}}(x)-H_q(x)\|_\infty
  \le
  M\Lip(G^\#)|x-c_q|
  \le K|x-c_q|.
\]
Thus \eqref{eq:agreement-switch-exact} makes the \(q\)-th switch exact.
Induction over \(q\) proves \eqref{eq:fan-exactness}.
\end{proof}

The preceding propositions give the folded update used in the realization:
\begin{equation}
\label{eq:mary-fan-recursion}
  G_{m+1}^\#(x)
  =
  \widehat{\mathsf T}\,
  \mathfrak F_{K_m}\bigl(x,G_m^\#(\tau_M(x))\bigr)
  +
  b^\#(x),
\end{equation}
where \(K_m:=1+M\Lip(G_m^\#)\).  If
\(L_m:=\Lip(G_m^\#)\) and \(L_b:=\Lip(b^\#)\), then
\(L_{m+1}\le M\|\widehat{\mathsf T}\|L_m+L_b\).  Hence, for constants
\(C_L,C_K>0\) and \(\Lambda\ge1\) depending only on the fixed data,
\(L_m\le C_L\Lambda^m\) and \(K_m\le C_K\Lambda^m\).

\subsection{Realization consequence}

With the reflection quotient
\(\varpi:\T\to[0,1]\) defined by \(\varpi([t]):=2\operatorname{dist}(t,\Z)\), we have
\[
  \varpi([Mt])=\tau_M(\varpi([t])),
\]
so the residual memory construction of
\cite[\S3.2]{affine-memory} applies with
\(F_M(\emb([t])):=\emb([Mt])\).  For fixed \(M\), the nontrivial
preimages are uniformly separated on the loop, and the resulting injective
CPwL skew product gives exact backward replay of the orbit
\(\tau_M^j(x)\).

\begin{theorem}[Fixed-arity folded realization]
\label{thm:mary-fan-realization}
Fix \(M\ge2\), and let
\[
  (W_Mf)(t)
  =
  \sum_{j\in\Z}A_jf(Mt-j)+B(t)
\]
have finitely supported matrix mask.  Assume that its homogeneous part
preserves \([0,L]\), and that \(f_0,B:\R\to\R^p\) are CPwL and supported
there.  Then we have
\[
  W_M^nf_0\in\Ups_{C_0,C_1n}(\ReLU;1,p),
  \qquad n\ge1,
\]
for constants \(C_0,C_1>0\) independent of \(n\).  The weights and biases
may be bounded by \(C_2\Lambda^n\), with constants depending only on the
fixed refinement and CPwL data.
\end{theorem}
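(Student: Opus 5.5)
The plan is to repeat the binary argument of Sections~\ref{sec:memory}--\ref{sec:global} verbatim, with the tent map replaced by \(\tau_M\), the cone switch replaced by the fan switch \(\mathfrak F_{K_m}\), and \(\widehat T\) replaced by \(\widehat{\mathsf T}\).

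\textbf{Functional recursion.} By \Cref{prop:mary-folded-branches,prop:exact-fan-switch}, every iterate satisfies the exact recursion \eqref{eq:mary-fan-recursion}. The hypotheses of \Cref{prop:exact-fan-switch} hold for each \(G_m\). Indeed, \(G_m\) is the vectorization of the continuous function \(W_M^mf_0\), which is supported in \([0,L]\) by the support-window hypothesis and the support of \(B\). It is CPwL, hence Lipschitz. The choice \(K_m=1+M\Lip(G_m^\#)\) satisfies \(K\ge M\Lip(G_m^\#)\). The Lipschitz recursion stated after \eqref{eq:mary-fan-recursion} gives \(K_m\le C_K\Lambda^m\). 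Continuity of \(G_{m+1}^\#\) across the folds \(c_q\) is automatic, since it is a vectorization of a continuous function, so each branch bound genuinely controls the global Lipschitz constant.

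\textbf{Backward evaluation.} Fix \(x\) and set \(x_j=\tau_M^j(x)\). Define \(U_n=G_0^\#(x_n)\) and, for \(j=n-1,\dots,0\),
\[
U_j=\widehat{\mathsf T}\,\mathfrak F_{K_{n-1-j}}(x_j,U_{j+1})+b^\#(x_j).
\]
The same backward induction as in \Cref{lem:backward-folded-evaluation} gives \(U_j=G_{n-j}^\#(x_j\)), and in particular \(U_0=G_n^\#(x)\).

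\textbf{Memory controller.} This step supplies \(x_n,x_{n-1},\dots,x_0\) in reverse order, and it is where I expect the main work. I would take \(F_M\) to be a global CPwL extension of \(\emb([t])\mapsto\emb([Mt])\). The readout satisfies \(\Pi(F_M(\emb([t])))=\tau_M(\Pi(\emb([t])))\) by \(\varpi([Mt])=\tau_M(\varpi([t]))\), so \eqref{eq:tent-from-circle} holds with \(\tau_M\). For injectivity, I would use the skew product \(\mathcal Q_M(r,y)=(F_M(r),\beta r+\alpha y)\). The binary proof of \Cref{lem:memory-injective} must be redone. If \(F_M(\emb([t]))=F_M(\emb([u]))\), then \([u]=[t+k/M]\) for some \(k\). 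For \(k\neq0\), one needs \(\|\emb([t+k/M])-\emb([t])\|_\infty\ge\delta_M>0\) uniformly in \(t\). This follows from the injectivity of \(\emb\) and the compactness of \(\T\), with \(\delta_M\) attained at worst for \(k=\pm1\). Choosing \(\alpha+\beta\le1\) and \(2\alpha<\beta\delta_M\) then forces \(y=\tilde y\) to be impossible unless \(k=0\), so \(\mathcal Q_M\) is injective on \(X=\Gamma\times\mem\). The CPwL inverse and its global extension \(\mathcal P_M\) are obtained exactly as before, and \Cref{lem:backward-replay} carries over.

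\textbf{Assembly and globalization.} The network has the same structure as in \Cref{thm:unit-affine}. It consists of \(n\) forward copies of \(\mathcal Q_M\), the fixed readout \(G_0^\#\circ\Pi\), and then \(n\) backward blocks. Each block is \(\mathcal P_M\), \(\Pi\), the fan switch \(\mathfrak F_{K_m}\), the fixed linear map \(\widehat{\mathsf T}\), and the readout \(b^\#\circ\Pi\). The fan switch has fixed width and depth for fixed \(M\), with weights bounded by \(C(1+K_m)\le C\Lambda^m\). The vectorization \(G_n\) is the first \(E\)-component of \(U_0\). Its cells satisfy the gluing compatibility conditions because \(W_M^nf_0\) is continuous and supported in \([0,L]\). \Cref{lem:clamped-gluing}, applied in parallel, then yields \(W_M^nf_0\in\Ups_{C_0,C_1n}(\ReLU;1,p)\) with weights bounded by \(C_2\Lambda^n\).
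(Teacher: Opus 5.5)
Your proposal is correct and follows essentially the same route as the paper: the backward recursion \(U_j=\widehat{\mathsf T}\,\mathfrak F_{K_{n-1-j}}(x_j,U_{j+1})+b^\#(x_j)\) is driven by the \(M\)-ary residual memory controller with \(F_M(\emb([t]))=\emb([Mt])\), the cone scales are controlled by the Lipschitz recursion, and clamped gluing then recovers the global iterate. Your separation constant \(\delta_M\) and the condition \(2\alpha<\beta\delta_M\) spell out the paper's brief remark that the nontrivial preimages are uniformly separated on the loop; the side claim that the worst case is \(k=\pm1\) is unproved but unnecessary, since compactness over the finitely many \(k\) already gives \(\delta_M>0\).
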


\begin{proof}
Let \(x_j:=\tau_M^j(x)\).  The memory controller supplies
\(x_n,\ldots,x_0\) in reverse order.  Starting from
\(U_n:=G_0^\#(x_n)\), define
\[
  U_j
  :=
  \widehat{\mathsf T}\,
  \mathfrak F_{K_{n-1-j}}(x_j,U_{j+1})
  +
  b^\#(x_j),
  \qquad j=n-1,\ldots,0.
\]
By \eqref{eq:mary-fan-recursion},
\(U_j=G_{n-j}^\#(x_j)\) by backward induction.  Since \(M\) is fixed,
all modules have fixed complexity, giving fixed width and depth \(O(n)\).
The coefficient bound follows from the Lipschitz estimate above,
and standard clamped gluing recovers the
global iterate.
\end{proof}

\begin{remark}[Homogeneous adjoint form]
\label{rem:mary-adjoint-fan}
The factorization also extends the memory-free homogeneous adjoint
construction.  Since
\[
  \bigl(\widehat{\mathsf T}\mathcal P_q\bigr)^{\trans}
  =
  \mathcal P_q^{\trans}\widehat{\mathsf T}^{\trans},
\]
each stage applies the common adjoint mask operator and then fan-switches
among the shift--reflection candidates
\(\mathcal P_q^{\trans}\widehat{\mathsf T}^{\trans}\Lambda\).
For endpoint-zero atoms, the terminal factor vanishes at every zigzag fold,
and the same Lipschitz estimate gives the adjacent cone bounds.  Thus the
binary adjoint construction extends to every fixed \(M\), with its single
cone switch replaced by the \(M\)-branch fan switch.
\end{remark}

\section*{Acknowledgments}

This work was supported by the Natural Sciences and Engineering Research Council of Canada
through its Discovery Grants program.

\end{document}